\newtheorem{thm}{Theorem}[section]
\newtheorem{la}[thm]{Lemma}
\newtheorem{Defn}[thm]{Definition}
\newtheorem{Remark}[thm]{Remark}
\newtheorem{prop}[thm]{Proposition}
\newtheorem{Number}[thm]{\!\!}
\newenvironment{numba}{\begin{Number}\rm}{\end{Number}}
\newenvironment{proof}{{\noindent\bf Proof.}}%
                  {\nopagebreak\hspace*{\fill}$\Box$\medskip\par}
\newcommand{\Punkt}{\nopagebreak\hspace*{\fill}$\Box$}
\newcommand{\wb}{\overline}
\newcommand{\ve}{\varepsilon}
\newcommand{\wt}{\widetilde}
\newcommand{\mto}{\mapsto}
\newcommand{\N}{{\mathbb N}}
\newcommand{\R}{{\mathbb R}}
\newcommand{\bS}{{\mathbb S}}
\newcommand{\cg}{{\mathfrak g}}
\newcommand{\cW}{{\mathcal W}}
\newcommand{\cL}{{\mathcal L}}
\newcommand{\cO}{{\mathcal O}}
\newcommand{\sub}{\subseteq}
\newcommand{\wh}{\widehat}
\DeclareMathOperator{\GL}{GL}
\DeclareMathOperator{\id}{id}
\newcommand{\sbull}{{\scriptscriptstyle \bullet}}
\DeclareMathOperator{\Diff}{Diff}
\DeclareMathOperator{\graph}{graph}
\DeclareMathOperator{\Lip}{Lip}
\DeclareMathOperator{\Evol}{Evol}
\DeclareMathOperator{\evol}{evol}
\DeclareMathOperator{\flt}{flat}
\begin{document}
\begin{center}
{\Large\bf Diffeomorphism groups of compact\\[2mm] convex~sets}\\[7mm]
{\bf Helge Gl\"{o}ckner and Karl-Hermann Neeb}\vspace{2mm}
\end{center}
\begin{abstract}
\hspace*{-6mm}For $K\sub\R^n$ a compact convex subset with non-empty interior,
let $\Diff_{\partial K}(K)$ be the group of all $C^\infty$-diffeomorphisms
of $K$ which fix $\partial K$ pointwise.
We show that $\Diff_{\partial K}(K)$ is a $C^0$-regular infinite-dimensional Lie group.
As a byproduct, we obtain results concerning solutions to
ordinary differential equations on compact convex sets.\vspace{2mm}
\end{abstract}
{\bf Classification:}
22E65 (primary); 34A12 (secondary)\\[2.3mm]
{\bf Key words:} compact convex set, diffeomorphism group, Lie group, flow, regularity, time-dependent vector field,
initial value problem, existence, non-open set,  Picard iteration, dependence on parameters, inverse function\\[9mm]
{\Large\bf Introduction and statement of the main results}\\[4mm]
Lie groups of smooth diffeomorphisms of compact manifolds (like the diffeomorphism group $\Diff(\bS^1)$
of the circle) are among the most prominent and important examples of infinite-dimensional
Lie groups (see, e.g., \cite{Ham}, \cite{KaM}, \cite{Les}, \cite{Mil}, \cite{Omo}; cf.\ \cite{PaS}).
A Lie group structure on $\Diff(K)$ even is available if $K$ is a compact manifold with boundary or corners
\cite{Mic}; this includes the case that $K\sub \R^n$ is a convex polyhedron.\footnote{Diffeomorphism groups of non-compact manifolds can also be treated; their Lie group structures are modelled only on Lie algebras
of compactly supported smooth vector fields. For Lie groups of real analytic diffeomorphisms of real analytic, compact manifolds
with or without boundary or corners,
cf.\ \cite{DaS}, \cite{Eyn}, and \cite{KaM}.}
In this article, we describe Lie groups of diffeomorphisms
of an arbitrary compact convex subset $K\sub \R^n$ with non-empty interior
(whose boundary $\partial K$ need not satisfy
any regularity assumptions).
To explain the result, let us call a map $\gamma\colon K\to\R^n$
\emph{smooth} if it is continuous, its restriction $\gamma|_{K^0}$ to
the interior of~$K$ is smooth, and all iterated directional derivatives
on~$K^0$ admit continuous extensions to all of~$K$
(see \ref{defnonop} for details).
We write $\Diff(K)$ for the group of all smooth diffeomorphisms of~$K$,
i.e., bijections $\phi\colon K\to K$ such that both $\phi$ and $\phi^{-1}$ are smooth
in the preceding sense.
We endow the space $C^\infty(K,\R^n)$ of all smooth $\R^n$-valued mappings on $K$
with the smooth compact-open topology (as recalled in \ref{defsmooCk}),
which makes it a Fr\'{e}chet space
(see \cite{GaN}, cf.\ \cite{AaS}).
Then also the closed vector subspace
\[
C^\infty_{\partial K}(K,\R^n):=\{\eta\in C^\infty(K,\R^n)\colon\eta|_{\partial K}=0\}
\]
of $C^\infty(K,\R^n)$
is a Fr\'{e}chet space.
Now
\[
\Diff_{\partial K}(K):=\{\phi\in \Diff(K)\colon \,(\forall x\in\partial K)\colon \phi(x)=x\}
\]
is a subgroup of~$\Diff(K)$.
We show that
\[
\Omega:=\{\phi-\id_K\colon \phi\in\Diff_{\partial K}(K)\}
\]
is an open $0$-neighbourhood in $C^\infty_{\partial K}(K,\R^n)$ (see Section~\ref{sec3}),
enabling us to consider $\Diff_{\partial K}(K)$ as a smooth manifold
modelled on $C^\infty_{\partial K}(K,\R^n)$ with
\[
\Diff_{\partial K}(K)\to \Omega,\quad \phi\mto\phi-\id_K
\]
as a global chart.
As our main result, we obtain (see Sections~\ref{sec4}, \ref{sec5} and \ref{sec6}):\\[4mm]
{\bf Theorem A.} \emph{$\Diff_{\partial K}(K)$ is a $C^0$-regular
Lie group.}\\[4mm]
Recall that,
if $G$ is a Lie group modelled on a locally convex space~$E$,
with multiplication $\mu\colon G\times G\to G$, then the tangent map
$T\mu\colon T(G\times G)\cong TG\times TG\to TG$ restricts to a smooth right action
\[
TG\times G\to TG, \quad (v,g)\mto v.g
\]
(identifying $G$ with the zero-section in $TG$).
Let $\cg:=L(G):=T_{\bf e}G\cong E$ be the Lie algebra of~$G$ (the tangent space at the neutral element ${\bf e}$).
The Lie group $G$ is called \emph{$C^0$-regular}
if for each $\gamma\in C([0,1],\cg)$, there is a (necessarily unique)
$C^1$-curve $\Evol^r(\gamma):=\eta\colon [0,1]\to G$ such that $\eta(0)={\bf e}$
and
\[
\eta'(t)=\gamma(t).\eta(t)\quad\mbox{for all $t\in[0,1]$,}
\]
and moreover the map
\[
\evol^r\colon C([0,1],\cg)\to G,\quad \gamma\mto\Evol^r(\gamma)(1)
\]
is smooth (using the compact-open topology on the left);
cf.\
\cite{Dah},
\cite{SEM},
\cite{Nee}.\\[2.3mm]
If $G$ is $C^0$-regular, then $G$ is \emph{regular} (i.e., it has the analogous property with $C^\infty([0,1],\cg)$
in place of $C([0,1],\cg)$). Regularity is a central concept in infinite-dimensional
Lie theory, and needed as a hypotheses in many results of this theory. We refer to \cite{Mil} and \cite{Nee}
for more information (cf.\ also \cite{KaM}). Proofs for regularity properties of diffeomorphism
groups can be found, e.g., in
\cite{Ham}, \cite{KaM}, \cite{KMR}, \cite{Mil}, \cite{Shm},
and \cite{Wal}.\\[2.3mm]
Previously, mappings of the form $\phi\mto\phi-\id$ have been used as a global chart
for the Lie group $\Diff_c(\R^n)$ of compactly supported smooth diffeomorphisms
of $\R^n$ \cite{DRN}, for certain weighted diffeomorphism groups of $\R^n$ (like Lie groups of rapidly decreasing diffeomorphisms)
\cite{Wal}, and for further more specialized diffeomorphism groups~\cite{KMR}.\\[2.3mm]
Compared to classical discussions of $\Diff(M)$ for a manifold~$M$,
we encounter the difficulty that neither the inverse function theorem,
nor the implicit function theorem, nor smooth dependence of fixed points on parameters
is readily available in the literature for mappings on sufficiently general non-open sets (like~$K$).
We therefore have to develop such tools as far as required
for our purposes.
We also have to develop a theory of ordinary differential equations
on~$K$, as the
$C^0$-regularity of $\Diff_{\partial K}(K)$ is closely linked
to flows of differential equations on~$K$.
In particular, we find that integral curves for smooth time-dependent vector fields
on~$K$ behave as nicely as in the classical case of a compact smooth manifold
without boundary, as long as the vector fields vanish on $\partial K$
(see Section~\ref{secODE}):\\[4mm]
{\bf Theorem B.} \emph{Let $J\sub\R$ be a non-degenerate interval, $K\sub\R^n$ a compact
convex set with non-empty interior, $P\sub F$ be a convex subset with non-empty interior
in a locally convex space~$F$, and}
\[
f\colon P\times J\times K\to\R^n
\]
\emph{be a smooth function such that $f(p,t,x)=0$ for all $p\in P$, $t\in J$ and $x\in\partial K$.
Then the initial value problem}
\[
y'(t)=f(p,t,y(t)),\quad y(t_0)=x_0
\]
\emph{has a unique solution $y_{p,t_0,x_0}\colon J\to K$ defined on all of~$J$,
for all $p\in P$, $t_0\in J$, and $x_0\in K$.
The associated flow}
\[
P\times J\times J\times K \to K,\quad (p,t_0,t,x_0)\mto y_{p,t_0,x_0}(t)
\]
\emph{is smooth.}\\[4mm]
Beyond $\Diff_{\partial K}(K)$,
for $K$ as before and $r\in\N\cup\{\infty\}$ we consider the
group $\Diff_{\partial K}^{C^r}(K)$ of all $C^r$-diffeomorphisms $\phi\colon K\to K$
with $\phi|_{\partial K}=\id_{\partial K}$. For $r\in\N$,
we make $\Diff_{\partial K}^{C^r}(K)$ a smooth Banach manifold with a global chart
and show that it is a topological group and has smooth right translations
\[
\rho_\phi\colon \Diff_{\partial K}^{C^r}(K)\to\Diff_{\partial K}^{C^r}(K),\quad \psi\mto\psi\circ \phi
\]
for all $\phi\in \Diff^{C^r}_{\partial K}(K)$.
Moreover, for all $r\in\N\cup\{\infty\}$ and $s\in\N_0\cup\{\infty\}$,
the composition map
\[
\Diff_{\partial K}^{C^{r+s}}(K)\times \Diff_{\partial K}^{C^r}(K)\to\Diff_{\partial K}^{C^r}(K),\;\;
(\psi,\phi)\mto\psi\circ\phi
\]
and the inversion map
\[
\Diff_{\partial K}^{C^{r+s}}(K)\to \Diff_{\partial K}^{C^r}(K),\quad\phi\mto\phi^{-1}
\]
are $C^s$ (see Sections \ref{sec3}, \ref{sec4}, and \ref{sec5}).
Such refined information was basic in the ILB-approach to
infinite-dimensional Lie groups (see \cite{Omo} and the references therein).
We shall also see that the map
\begin{equation}\label{diffparbd}
\Diff^{C^r}_{\partial K}(K)\times K\to K,\quad (\phi,y)\mto \phi^{-1}(y)
\end{equation}
is $C^r$, as a special case
of an inverse function theorem with parameters:\\[4mm]
{\bf Theorem C.} \emph{Let $F$ be a locally convex space, $U\sub F$ be a convex subset with
non-empty interior, $r\in\N\cup\{\infty\}$ and $f\colon U\times K\to K$ be a $C^r$-map such that
$f_z:=f(z,\sbull)\in\Diff^{C^r}_{\partial K}(K)$ for all $z\in U$. Then also the following map
is $C^r$}:
\[
g\colon U\times K\to K,\quad (z,y)\mto (f_z)^{-1}(y).
\]
For $K$ as before, let $\Diff_{\flt}(K)$ be the group of all $\phi\in\Diff_{\partial K}(K)$
such that not only $\phi-\id_K$, but also all derivatives of this mapping vanish on $\partial K$.
In Section~\ref{secflat}, we show:\\[4mm]
{\bf Theorem D.}
\emph{$\Diff_{\flt}(K)$ is a $C^0$-regular Lie subgroup of $\Diff_{\partial K}(K)$.}\\[2.3mm]
As a special case of our considerations, we obtain a $C^0$-regular smooth Lie group
structure on the index 2 subgroup $\Diff([0,1])_+:=\Diff_{ \{0,1\} } ([0,1]) \sub \Diff([0,1])$
of all orientation-preserving smooth diffeomorphisms of $[0,1]$.
It is now easy to make also $\Diff([0,1])$
a Lie group with $\Diff([0,1])_+$ as an open submanifold.\footnote{Since $\Diff([0,1])$ is generated by $\Diff([0,1])_+$
and the diffeomorphism $I\colon [0,1]\to[0,1]$, $x\mto 1-x$, it suffices to show that
the map $\Diff([0,1])_+\to \Diff([0,1])_+$, $\phi\mto I\circ \phi\circ I$ (with $I=I^{-1}$)
is smooth, or equivalently, that the map $\Omega\to\Omega$, $\gamma\mto I\circ (\id_{[0,1]}+\gamma)\circ I-\id_{[0,1]}
=:\eta$
is smooth. But $\eta=-\gamma\circ I=-I^*(\gamma)$ is the negative of the
pullback of $\gamma$ along $I$, where $I^*$ a continuous linear (and hence smooth) self-map
of $C^\infty_{\{0,1\}}([0,1])$ (see \cite{GaN}, cf.\ \cite{FUN}).}
The Lie group $\Diff([0,1])_+$ is also of pedagogical interest,
as it is the diffeomorphism group whose Lie group structure is most easily obtained, using only a minimum of analysis and
geometry (already $\Diff(\bS^1)_+$ and $\Diff_c(\R)$ are more complicated to discuss).
The paper places this construction in a larger context
and provides relevant, more difficult additional information (like $C^0$-regularity).\\[2.3mm]
Note that the Lie group structure on $\Diff_{\flt}(K)$ (but not on $\Diff_{\partial K}(K)$) can be
obtained in an alternative fashion, as follows.
Consider the vector subspace $C^\infty_{\flt}(K,\R^n)$ of all $\gamma\colon K\to \R^n$ in $C^\infty_{\partial K}(K,\R^n)$ such that also all
derivatives of~$\gamma$ vanish on~$\partial K$. Then $\gamma$ extends via $0$ to a smooth map $\wt{\gamma}\colon \R^n\to\R^n$
(see, e.g., \cite[Proposition 3.32]{Nik})
which is an element of the weighted function space $C^\infty_\cW(\R^n,\R^n)$ (as defined in \cite{Wal}),
when the set of weights is chosen as $\cW:=\{1,\infty{\bf 1}_{\R^n\setminus K}\}$ where~$1$ is the constant function on $\R^n$ with value~$1$ and ${\bf 1}_{\R^n\setminus K}$ is the characteristic function (indicator function)
of the subset $\R^n\setminus K$ of $\R^n$. It is clear that the map
\[
C^\infty_{\flt}(K,\R^n)\to C^\infty_\cW(\R^n,\R^n),\quad \gamma \mto \wt{\gamma}
\]
is an isomorphism of topological vector spaces, whence
\[
\Diff_{\flt}(K)\to \Diff_\cW(\R^n),\quad \id_K+\gamma\mto \id_{\R^n}+\wt{\gamma}
\]
is an isomorphism of Lie groups, where $\Diff_\cW(\R^n)$ is a special case of the weighted
diffeomorphism groups
\[
\Diff_\cW(E):=\{\phi\in\Diff(E)\colon \phi-\id_E,\phi^{-1}-\id_E\in C^\infty_\cW(E, E)\}
\]
constructed in \cite{Wal}, for $E$ a real Banach space and $\cW$ a set of functions
$f\colon E\to\R\cup\{\pm\infty\}$ which contains the constant function~$1$ on~$E$.
We mention that \cite{Wal} only establishes regularity
for weighted diffeomorphism groups, not $C^0$-regularity.
\section{Preliminaries and notation}\label{sec1}
We write $\N=\{1,2,\ldots\}$ and $\N_0:=\N\cup\{0\}$.
All locally convex (topological real vector) spaces
and all compact topological spaces are assumed Hausdorff.
We write $\graph(f):=\{(x,f(x))\colon x\in X\}\sub X\times Y$
for the graph of a function $f\colon X\to Y$.
If $f\colon X\to Y$ is a function between metric spaces $(X,d_X)$ and $(Y, d_Y)$,
we define
\[
\Lip(f):=\sup\left\{\frac{d_Y(f(x),f(y))}{d_X(x,y)}\colon x\not=y\in X\right\}\in
[0,\infty]
\]
and call $f$ \emph{Lipschitz} if $\Lip(f)<\infty$. If $(E,\|.\|)$ is a Banach space,
we write $\GL(E)$ for the group of continuous automorphisms of
the vector space~$E$.
For $x\in E$ and $r>0$, we write $B^E_r(x):=\{y\in E\colon \|y-x\|<r\}$
and $\wb{B}^E_r(x):=\{y\in E\colon \|y-x\|\leq r\}$.
A subset $U$ of a locally convex space $E$ is called \emph{locally convex}
if, for each $x\in U$, there exists a convex neighbourhood of~$x$ in~$U$
(with respect to the induced topology). If $q$ is a continuous seminorm on $E$,
we write $\wb{B}^q_r(0):=\{x\in E\colon q(x)\leq r\}$
for $r>0$. Given locally convex spaces $E$ and $F$, we write $\cL(E,F)_b$ for the space
of continuous linear mappings from $E$ to $F$, endowed with the topology of
uniform convergence on bounded sets. We abbreviate $\cL(E):=\cL(E,E)$.\\[2.3mm]
We shall use a setting of $C^r$-maps between open subsets of locally convex spaces
which goes back to A. Bastiani~\cite{Bas} and is also known as Keller's $C^r_c$-theory.
See \cite{RES}, \cite{GaN}, \cite{Ham}, \cite{Mic} and \cite{Mil}
for streamlined introductions, cf.\ also \cite{BGN}.
For a discussion of $C^r$-maps on non-open domains (as in \ref{defnonop}),
see~\cite{GaN}.
\begin{numba}\label{defCr}
If $E$ and $F$ are locally convex spaces, $U\sub E$
is open and $r\in\N_0\cup\{\infty\}$, then a map
$f\colon U\to F$ is called $C^r$ if it is continuous,
the iterated directional derivatives
\[
d^{(k)}f(x,y_1,\ldots, y_k):=(D_{y_k}\cdots D_{y_1}f)(x)
\]
exist for all $k\in\N$ such that $k\leq r$,
all points $x\in U$ and all directions $y_1,\ldots , y_k\in E$,
and the maps $d^{(k)}f\colon U\times E^k\to F$ so obtained are
continuous.
If $r\geq 1$, then a map $f$ as before is $C^r$ if and only if $f$ is $C^1$ and $df:=d^{(1)}f\colon U\times E\to F$ is $C^{r-1}$ (see, e.g., \cite{RES} or \cite{GaN}).
\end{numba}
\begin{numba}\label{rulepartdiff}
(Rule on partial differentials).
If $E$, $F$, and $H$ are locally convex spaces, $U\sub E$ and $V\sub F$
open subsets and $f\colon U\times V\to H$ a continuous map,
then $f$ is $C^1$ if and only if the directional derivatives
\[
d_1f(x,y; x_1):=(D_{(x_1,0)}f)(x,y)\quad\mbox{and}\quad
d_2f(x,y;y_1):=(D_{(0,y_1)}f)(x,y)
\]
exist for all $x\in U$, $y\in V$, $x_1\in E$ and $y_1\in F$,
and define continuous functions
$d_1f\colon U\times V\times E\to H$ and $d_2f\colon U\times V\times F\to H$.
In this case,
\[
df((x,y),(x_1,y_1))=d_1f(x,y;x_1)+d_2f(x,y;y_1)
\]
for all $(x,y)\in U\times V$
and $(x_1,y_1)\in E\times F$ (see \cite{RES}).
\end{numba}
\begin{numba}\label{defnonop}
If $U$ is replaced with a locally convex subset $U\sub E$ with dense interior~$U^0$
in \ref{defCr}, then a map $f\colon U\to F$ is called $C^r$
if $f$ is continuous, $f|_{U^0}$ is $C^r$ and $d^{(k)}(f|_{U^0})\colon U^0\times E^k\to F$
has a (necessarily unique) continuous extension $d^{(k)}f\colon U\times E^k\to F$
for all $k\in\N$ such that $k\leq r$.
Then
\[
f^{(k)}(x):=d^{(k)}f(x,\sbull)\colon E^k\to F
\]
is a continuous symmetric $k$-linear map, for each $k$ as before and $x\in U$
(see \cite{GaN}).
We abbreviate $df:=d^{(1)}f$ and $f'(x):=f^{(1)}(x)=df(x,\sbull)$,
which is a continuous linear map from~$E$ to~$F$.
\end{numba}
\begin{numba}\label{defsmooCk}
In the preceding situation, we endow the space $C^r(U,F)$
of all $C^r$-maps $f\colon U\to F$ with the so-called \emph{compact-open $C^r$-topology},
i.e., the initial topology with respect to the linear maps
\[
C^r(U,F)\to C(U\times E^k,F)_{c.o.},\quad f\mto d^{(k)}f
\]
for all $k\in\N_0$ with $k\leq r$ (with $d^{(0)}f:=f$), where the spaces on the right hand side
are endowed with the compact-open topology.
\end{numba}
\begin{numba}
If $J\sub \R$ is a non-degenerate interval and $\gamma \colon J\to E$ a $C^1$-curve,
as usual we write $\gamma'(t):=\frac{d\gamma}{dt}(t)=(D_1\gamma)(t)\in E$
for $t\in J$;
no confusion with $\gamma'\colon J\to \cL(\R,E)$ should arise (as the meaning
will be clear from the context).
\end{numba}
\begin{numba}
Let $E$, $F$, and $H$ be locally convex spaces, $U\sub E$ and $V\sub F$
be locally convex subsets with dense interior, $r,s\in \N_0\cup\{\infty\}$
and $f\colon U\times V\to H$ be a map (more generally, if $r=0$, then $U$
can be any topological space). Following \cite{Alz} and \cite{AaS}, 
$f$ is called a $C^{r,s}$-map if $f$ is continuous, the iterated directional derivatives
\[
d^{(k,\ell)}f(x,y,x_1,\ldots,x_k,y_1,\ldots, y_\ell):=
(D_{(x_k,0)}\cdots D_{(x_1,0)}D_{(0,y_\ell)}\cdots D_{(0,y_1)}f)(x,y)
\]
exist for all $k,\ell\in\N_0$ with $k\leq r$ and $\ell\leq s$
and all $(x,y)\in U^0\times V^0$, $x_1,\ldots, x_k\in E$ and $y_1,\ldots, y_\ell\in F$,
and admit continuous extensions
\[
d^{(k,\ell)}f\colon U\times V\times E^k\times F^\ell\to H.
\]
We shall frequently use the following \emph{exponential law}:
If $f\colon U\times V\to H$ is $C^{r,s}$, then $f^\vee(x):=f(x,\sbull)\in C^s(V,H)$
for all $x\in U$, and the map
\[
f^\vee\colon U\to C^s(V,H),\quad x\mto f(x,\sbull)
\]
is $C^r$ \cite[Theorem 3.25\,(a)]{AaS}.
If $V$ is, moreover, locally compact and a function $g\colon U\to C^s(V,E)$ is $C^r$, then
\[
\wh{g}\colon U\times V\to H,\quad \wh{g}(x,y):=g(x)(y)
\]
is $C^{r,s}$ \cite[Theorem 3.28\,(a)]{AaS}.
An analogous definition of $C^{\alpha_1,\ldots,\alpha_n}$-maps on $n$-fold direct products
is possible for $\alpha_1,\ldots,\alpha_n\in\N_0\cup\{\infty\}$, and again an exponential law is available
(see \cite{Alz}).
\end{numba}
Vector-valued integrals depend continuously on parameters (see, e.g., \cite{GaN}).
\begin{numba}\label{pardepint}
Let $U$ be a topological space, $E$ be a locally convex space
and\linebreak
$f\colon U\times [a,b]\to E$ be a continuous map such that the weak integral
\[
g(x):=\int_a^b f(x,t)\, dt
\]
exists in $E$ for each $x\in U$. Then $g\colon U\to E$ is continuous.
\end{numba}
\begin{numba}
If $K$ is a compact topological space and $(F,\|.\|_F)$ a Banach space, we write
$\|.\|_\infty$ for the supremum norm on $C(K,F)$ given by
\[
\|\gamma\|_\infty:=\sup_{x\in K}\|\gamma(x)\|_F\quad\mbox{for $\gamma\in C(K,F)$,}
\]
which defines the compact-open topology on $C(K,F)$.\\[2.3mm]
If $(F,\|.\|_F)=(\cL(E_1,E_2),\|.\|_{op})$ is a space of continuous linear maps between two Banach spaces $(E_1,\|.\|_1)$
and $(E_2,\|.\|_2)$ with the operator norm, we write $\|\gamma\|_{\infty,op}$ instead of $\|.\|_\infty$, for emphasis.\\[2.3mm]
If $F$ is a locally convex space and $q$ a continuous seminorm on~$F$,
we define a seminorm $\|.\|_{\infty,q}$ on $C(K,F)$ via
\[
\|\gamma\|_{\infty, q}:=\sup_{x\in K}q(\gamma(x)).
\]
The compact-open topology on $C(K,F)$ is defined by the set of all $\|.\|_{\infty,q}$.
\end{numba}
\begin{numba}\label{Dcts}
If $E:=\R^n$ and $K\sub E$ is a compact convex subset with non-empty interior,
then the map
\[
D\colon C^1(K,E)\to C(K,\cL(E)),\quad \gamma\mto\gamma'
\]
is continuous linear.
This follows from the observation that
\[
C^1(K,E)\to [0,\infty[\, ,\quad \gamma\mto \|\gamma'\|_{\infty,op}=\sup_{(x,y)\in L}\|d\gamma(x,y)\|
\]
is a continuous seminorm on $C^1(K,E)$ since
$C^1(K,E)\to C(K\times E,E)$, $\gamma\mto d\gamma$ is a continuous linear map and
$L:=K\times \wb{B}^R_1(0)$ is compact.
\end{numba}
\begin{la}\label{C2fprime}
Let $E$ and $F$ be locally convex spaces, $U\sub E$ be a locally convex subset
with dense interior and $f\colon U\to F$ be a $C^2$-map.
Then the map $f'\colon U\to\cL(E,F)_b$, $x\mto f'(x)=df(x,\sbull)$ is continuous.
\end{la}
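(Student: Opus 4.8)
The plan is to verify continuity of $f'$ at an arbitrary point $x_0\in U$. Since the sets $\{T\in\cL(E,F)\colon \sup_{y\in B}q(Ty)\le\ve\}$, for $B\sub E$ bounded, $q$ a continuous seminorm on $F$ and $\ve>0$, form a basis of $0$-neighbourhoods in $\cL(E,F)_b$, it suffices to show: for all such $B$, $q$ and $\ve$ there is a neighbourhood $W$ of $x_0$ in $U$ with $\sup_{y\in B}q\bigl(f'(x)(y)-f'(x_0)(y)\bigr)\le\ve$ for all $x\in W$. By local convexity of $U$, I first pick a convex neighbourhood $W_0$ of $x_0$ in $U$, so that the segment $[x_0,x]$ lies in $U$ for each $x\in W_0$.

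The next step is a mean value argument. As $f$ is $C^2$, the map $df=d^{(1)}f\colon U\times E\to F$ is $C^1$, and for $x$ with $[x_0,x]\sub U^0$ the fundamental theorem of calculus applied to $s\mto df(x_0+s(x-x_0),y)$ yields $f'(x)(y)-f'(x_0)(y)=df(x,y)-df(x_0,y)=\int_0^1 d^{(2)}f(x_0+s(x-x_0),y,x-x_0)\,ds$, using $d^{(2)}f(z,y,v)=\bigl(D_v(df(\sbull,y))\bigr)(z)$. Both sides are continuous on $U\times E$ (the right-hand side by \ref{pardepint}), and $U^0$ is dense in $U$; approximating $x_0$ and $x$ by interior points (whose connecting segments lie in $U^0$) extends the identity to all $x\in W_0$, $y\in E$. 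Passing to $q$ gives $q\bigl(f'(x)(y)-f'(x_0)(y)\bigr)\le\sup_{s\in[0,1]}q\bigl(d^{(2)}f(x_0+s(x-x_0),y,x-x_0)\bigr)$.

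Everything then reduces to a single local bound on $d^{(2)}f$, and this is the step I expect to be the main obstacle, because $B$ is only bounded, not compact, so one cannot finish by covering $B$ with finitely many neighbourhoods. The decisive point is that $d^{(2)}f$ is linear in its second and third arguments. Since $d^{(2)}f$ vanishes at $(x_0,0,0)$, continuity lets me choose a convex neighbourhood $W\sub W_0$ of $x_0$, a balanced convex $0$-neighbourhood $M\sub E$ and a further $0$-neighbourhood $N\sub E$ with $q(d^{(2)}f(z,y,v))\le 1$ whenever $z\in W$, $y\in N$ and $v\in M$. As $B$ is bounded, $N$ \emph{absorbs} it, say $B\sub\lambda N$ with $\lambda>0$; linearity in the second argument then upgrades the estimate to $q(d^{(2)}f(z,y,v))\le\lambda$ for $z\in W$, $y\in B$, $v\in M$, and homogeneity in the third argument yields $q(d^{(2)}f(z,y,v))\le\lambda\,\mu_M(v)$ for all $v\in E$, with $\mu_M$ the continuous Minkowski functional of $M$. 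This absorption, using linearity in the evaluation slot, is precisely what replaces the compactness argument that would otherwise be needed.

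Finally, since $W$ is convex, every point $x_0+s(x-x_0)$ with $x\in W$ and $s\in[0,1]$ lies in $W$, so combining the two displayed estimates gives $\sup_{y\in B}q\bigl(f'(x)(y)-f'(x_0)(y)\bigr)\le\lambda\,\mu_M(x-x_0)$ for all $x\in W$. As $\mu_M$ is a continuous seminorm, the right-hand side tends to $0$ as $x\to x_0$, hence is $\le\ve$ on a sufficiently small neighbourhood of $x_0$. This proves continuity of $f'$ at $x_0$, and $x_0\in U$ was arbitrary.
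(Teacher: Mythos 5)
Your proposal is correct and follows essentially the same route as the paper: both express $f'(x)(y)-f'(x_0)(y)$ as $\int_0^1 d^{(2)}f(x_0+s(x-x_0),y,x-x_0)\,ds$, use continuity of $d^{(2)}f$ at $(x_0,0,0)$ to get a bound on a product neighbourhood, and exploit bilinearity of $d^{(2)}f$ in its last two slots to absorb the bounded set $B$ (the paper's reciprocal rescaling $(b,y-x_0)\mapsto(rb,\frac{1}{r}(y-x_0))$ into a single seminorm ball is exactly your absorption-plus-Minkowski-functional step in different packaging). If anything, you are slightly more careful than the paper in justifying the integral identity on the non-open set $U$ by approximation from the dense interior.
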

\begin{proof}
Let $q$ be a continuous seminorm on $F$ and $B\sub E$ be a bounded set.
By continuity of $d^{(2)}f\colon U\times E\times E\to F$,
there is a convex neighbourhood $V\sub U$ of $x$ and a continuous seminorm $p$ on $E$
such that
\[
d^{(2)}f (V\times \wb{B}^p_1(0)\times \wb{B}^p_1(0))\sub\wb{B}^q_1(0).
\]
Then $rB\sub \wb{B}^p_1(0)$ for some $r>0$.
After shrinking $V$, we may assume that $V-x\sub \wb{B}^p_r(0)$.
For all $y\in V$ and $b\in B$, we deduce that
\begin{eqnarray*}
q((f'(y)-f'(x))(b)) &=& q(df(y,b)-df(x,b))\\
&=& q\left(\int_0^1 d^{(2)}f(x+t(y-x),b,y-x)\,dt\right)\\
&\leq & \int_0^1 \underbrace{q\Big(d^{(2)}f\Big(x+t(y-x),rb,\frac{1}{r}(y-x)\Big)\Big)}_{\leq 1}\, dt\leq 1
\end{eqnarray*}
and thus $(f'(y)-f'(x))(B)\sub \wb{B}^q_1(0)$.
\end{proof}
\begin{numba}
If $K$ is a compact convex subset of $E:=\R^n$ with non-empty interior and $r\in\N\cup\{\infty\}$,
we let $\Diff^{C^r}_{\partial K}(K)$ be the group of all $C^r$-diffeomorphisms~$\phi$ of~$K$
such that $\phi|_{\partial K}=\id_{\partial K}$.
We let $C^r_{\partial K}(K,E)\sub C^r(K,E)$
be the closed vector subspace of all $\gamma\in C^r(K,E)$ such that $\gamma|_{\partial K}=0$.
We abbreviate
$\Diff_{\partial K}(K):=\Diff^{C^\infty}_{\partial K}(K)$.
\end{numba}
\section{Auxiliary results}\label{sec2}
The following fact is well-known; the simple proof is recalled in Appendix~\ref{appA}.
\begin{la}\label{clogra}
Let $f\colon X\to K$ be a map from a topological space $X$
to a Hausdorff topological space $K$.
\begin{itemize}
\item[\rm(a)]
If $f$ is continuous, then its graph $\graph(f)$ is closed in $X\times K$.
\item[\rm(b)]
If $K$ is compact, then $f$ is continuous if and only if $\graph(f)$ is closed.
\end{itemize}
\end{la}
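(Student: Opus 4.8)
The plan is to handle the two parts in sequence, with part~(a) supplying the forward implication of part~(b), so that compactness of $K$ is only needed for the converse. Throughout, the only genuinely non-formal ingredient will be the tube-lemma fact that projection along a compact factor is a closed map.

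For part~(a), I would argue that the complement of $\graph(f)$ in $X\times K$ is open. Given $(x_0,y_0)\in (X\times K)\setminus\graph(f)$, we have $y_0\not=f(x_0)$, and since $K$ is Hausdorff there are disjoint open sets $V,W\sub K$ with $f(x_0)\in V$ and $y_0\in W$. By continuity of $f$, the set $f^{-1}(V)$ is an open neighbourhood of $x_0$, and then $f^{-1}(V)\times W$ is an open neighbourhood of $(x_0,y_0)$ which misses $\graph(f)$: if $(x,f(x))\in f^{-1}(V)\times W$, then $f(x)\in V\cap W=\emptyset$, a contradiction. A slicker equivalent is to note that $\graph(f)$ is the preimage of the diagonal $\Delta_K:=\{(k,k)\colon k\in K\}$ under the continuous map $X\times K\to K\times K$, $(x,y)\mto (f(x),y)$; as $K$ is Hausdorff, $\Delta_K$ is closed, and hence so is its preimage $\graph(f)$.

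For part~(b), the ``only if'' direction is precisely part~(a) (and uses neither compactness nor any new idea). For the converse, assume $\graph(f)$ is closed and $K$ is compact; I would establish continuity of $f$ by showing that $f^{-1}(C)$ is closed in $X$ for every closed $C\sub K$. The point is that
\[
\graph(f)\cap (X\times C)=\{(x,f(x))\colon f(x)\in C\}
\]
is closed in $X\times K$ (being the intersection of two closed sets), and that its image under the projection $\pi_X\colon X\times K\to X$ is exactly $f^{-1}(C)$.

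The main step, and the only place compactness enters, is therefore the assertion that $\pi_X\colon X\times K\to X$ is a closed map when $K$ is compact. I would prove this directly: given a closed set $A\sub X\times K$ and a point $x_0\notin\pi_X(A)$, the slice $\{x_0\}\times K$ is disjoint from~$A$, so for each $k\in K$ there is a basic open box $U_k\times V_k\ni (x_0,k)$ disjoint from~$A$. The sets $V_k$ cover the compact space~$K$, so finitely many $V_{k_1},\ldots,V_{k_m}$ already cover it; then $U:=U_{k_1}\cap\cdots\cap U_{k_m}$ is an open neighbourhood of $x_0$ with $U\times K$ disjoint from~$A$ (any $(x,k)\in U\times K$ lies in some $U_{k_i}\times V_{k_i}$), whence $U\cap\pi_X(A)=\emptyset$. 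Thus $\pi_X(A)$ is closed. Applying this to $A=\graph(f)\cap(X\times C)$ yields that $f^{-1}(C)=\pi_X(A)$ is closed, which completes the proof.
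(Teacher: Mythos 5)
Your proof is correct, but it takes a genuinely different route from the paper's. The paper argues with nets throughout: part~(a) follows from uniqueness of limits of nets in Hausdorff spaces, and part~(b) is proved by contraposition --- if $f$ is discontinuous at some $x$, one picks points $x_U\in U\setminus f^{-1}(V)$ for each neighbourhood $U$ of $x$ (where $V$ is a neighbourhood of $f(x)$ witnessing the discontinuity), uses compactness of $K\setminus V$ to extract a convergent subnet of $(f(x_U))_U$, and thereby exhibits a limit point of $\graph(f)$ that lies outside $\graph(f)$. You instead work entirely with open and closed sets: part~(a) via Hausdorff separation (equivalently, your closed-diagonal argument), and part~(b) directly rather than by contraposition, reducing continuity to closedness of $f^{-1}(C)=\pi_X\bigl(\graph(f)\cap(X\times C)\bigr)$ for closed $C\sub K$ and proving that the projection $\pi_X\colon X\times K\to X$ is a closed map when $K$ is compact (the tube lemma). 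Both proofs are complete; the trade-off is that the paper's argument is shorter for a reader fluent with nets but invokes the nontrivial fact that every net in a compact space admits a convergent subnet, whereas your argument runs directly from the open-cover definition of compactness and isolates, as a reusable intermediate result, the closed-projection property (which in fact characterizes compactness of $K$). Your decomposition also makes transparent exactly where compactness enters, namely only in the converse direction of~(b).
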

\begin{la}\label{autctsinv}
Let $X$ be a topological space, $K$ a compact topological space and
$f\colon X\times K\to K$ be a continuous mapping such that
\[
f_x:=f(x,\sbull)\colon K\to K,\quad y\mto f(x,y)
\]
is bijective for each $x\in K$.
Then the map
\[
g\colon X\times K\to K,\quad (x,z)\mto (f_x)^{-1}(z)
\]
is continuous and
$h\colon \!X\!\times \! K\!\to\! X\! \times \! K$, $(x,y)\mto (x,f(x,y))$
a homeomorphism.
\end{la}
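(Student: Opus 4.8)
The plan is to prove first that $g$ is continuous, exploiting that its codomain $K$ is compact, and then to derive the homeomorphism assertion for $h$ as an easy consequence, since the inverse of $h$ can be written through $g$. (Here I read the hypothesis as ``$f_x$ is bijective for each $x\in X$,'' the natural quantification.) The key idea is that, for maps into a compact Hausdorff space, continuity is equivalent to having a closed graph (Lemma~\ref{clogra}(b)), and the graph of an ``inverse family'' like $g$ is visibly closed although its continuity is not apparent from the defining formula.

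For the continuity of $g$, I would invoke Lemma~\ref{clogra}(b): since $K$ is compact, it suffices to show that
\[
\graph(g)=\{((x,z),w)\in (X\times K)\times K\colon (f_x)^{-1}(z)=w\}
=\{((x,z),w)\colon f(x,w)=z\}
\]
is closed in $(X\times K)\times K$. To this end I would consider the map
\[
\psi\colon (X\times K)\times K\to K\times K,\quad ((x,z),w)\mto (f(x,w),z),
\]
which is continuous because $f$ and the coordinate projections are continuous. Writing $\Delta_K\sub K\times K$ for the diagonal, one has $\graph(g)=\psi^{-1}(\Delta_K)$. As $K$ is Hausdorff, $\Delta_K$ is closed, whence $\graph(g)$ is closed and Lemma~\ref{clogra}(b) yields the continuity of $g$.

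It then remains to treat $h$. The map $h(x,y)=(x,f(x,y))$ is continuous, as both of its components are; and it is a bijection, since for given $(x,z)$ the equation $f(x,y)=z$ has the unique solution $y=(f_x)^{-1}(z)$. Hence its inverse is
\[
h^{-1}\colon X\times K\to X\times K,\quad (x,z)\mto (x,g(x,z)),
\]
which is continuous by the first part, so that $h$ is a homeomorphism. The only non-formal point in the whole argument is the continuity of $g$; once the closed-graph criterion reduces it to the closedness of $\psi^{-1}(\Delta_K)$, everything else is bookkeeping.
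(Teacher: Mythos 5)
Your proposal is correct and follows essentially the same route as the paper: both arguments rest on the closed-graph criterion of Lemma~\ref{clogra}(b) to get continuity of $g$, and then obtain the homeomorphism property of $h$ from the formula $h^{-1}(x,z)=(x,g(x,z))$. The only (immaterial) difference is how closedness of $\graph(g)$ is verified: you exhibit it as the preimage of the diagonal under a continuous map, while the paper writes it as the image of the closed set $\graph(f)$ under the coordinate-swapping homeomorphism $(x,y,z)\mapsto(x,z,y)$; you are also right that the hypothesis should read ``for each $x\in X$.''
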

\begin{proof}
Because $f$ is continuous, $\graph(f)$ is closed in $X\times K\times K$
(see Lemma~\ref{clogra}).
Since
\[
\sigma\colon X\times K\times K\to X\times K\times K,\quad (x,y,z)\mto (x,z,y)
\]
is a homeomorphism, also $\graph(g)=\sigma(\graph(f))$ is closed
in $X\times K\times K$. Hence $g$ is continuous, by Lemma~\ref{clogra}\,(b).
It is immediate from the definition that the continuous map~$h$ is a bijection with inverse
\[
h^{-1}(x,z)=(x,(f_x)^{-1}(z))=(x,g(x,z)).
\]
As $g$ is continuous, $h^{-1}$ is continuous and thus $h$ is a homeomorphism.
\end{proof}
The following lemma generalizes a special case stated in \cite[Theorem 5.3.1]{Ham}.
\begin{la}\label{invlinpar}
Let $E$ and $F$ be locally convex spaces, $U\sub E$ a locally convex
subset with dense interior, $k\in \N\cup\{\infty\}$
and
\[
f\colon U\times F\to F
\]
be a $C^k$-map such that $f_x:=f(x,\sbull)\colon F\to F$
is linear and bijective for each $x\in U$ and the map
\[
g\colon U\times F\to F,\quad g(x,z):=(f_x)^{-1}(z)
\]
is continuous. Then $g$ is $C^k$.
\end{la}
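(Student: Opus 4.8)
The plan is to differentiate the identity $f(x,g(x,z))=z$ and bootstrap on the order of differentiability, exploiting the linearity of $f_x$ to identify the derivative of $g$ with an explicit formula involving the (already continuous) inverse map $g$ itself. Since $f_x$ is linear, note first that $g_x=(f_x)^{-1}$ is also linear for each fixed $x\in U$, so $g(x,\sbull)\in\cL(F,F)$. The key relation is
\[
f(x,g(x,z))=z\quad\mbox{for all $(x,z)\in U\times F$,}
\]
which I would like to differentiate partially in both the $U$-direction and the $F$-direction.

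First I would treat the case $k=1$ and establish that $g$ is $C^1$ by verifying that its partial directional derivatives exist and are continuous, invoking the rule on partial differentials (\ref{rulepartdiff}). For the derivative in the $F$-direction: because $g_x$ is linear and continuous, $d_2g(x,z;z_1)=g_x(z_1)=g(x,z_1)$, which is continuous in $(x,z,z_1)$ by hypothesis and independent of~$z$. For the derivative in the $U$-direction, I would differentiate the relation above along a direction $x_1\in E$ on the interior $U^0$. Writing $w(x,z):=g(x,z)$ and using the chain rule together with the linearity of $f_x$ (so that $d_2f(x,w;\sbull)=f_x$), I expect to obtain
\[
d_1f(x,w;x_1)+f_x\big(d_1g(x,z;x_1)\big)=0,
\]
hence the candidate formula
\[
d_1g(x,z;x_1)=-g\big(x,\,d_1f(x,g(x,z);x_1)\big).
\]
The right-hand side is manifestly continuous in $(x,z,x_1)$, being a composition of $g$ (continuous by hypothesis) with the continuous map $d_1f$ and the continuous evaluation $g(x,\sbull)$. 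Establishing that this directional derivative genuinely exists on $U^0$ and extends continuously to all of $U$ is the analytic heart of the $C^1$ step; I would justify the differentiation either by a mean-value/integral argument applied to $f$ along the segment, as in the proof of Lemma~\ref{C2fprime}, or by applying the known chain rule for $C^1$-maps on the interior and then extending by continuity using the definition in \ref{defnonop}.

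With the $C^1$ case in hand, the bootstrap to general $k$ is the routine but essential inductive step. Both formulas for $d_1g$ and $d_2g$ express the first derivatives of~$g$ as compositions and evaluations built from $g$ itself, from $f$ and its derivative $d_1f$, and from continuous linear operations; crucially, they involve $g$ only through compositions with maps that are one degree smoother. Assuming inductively that $g$ is $C^{k-1}$ whenever $f$ is $C^{k-1}$ (and $g$ continuous), I would observe that the right-hand sides of the derivative formulas are themselves $C^{k-1}$, since $f$ is $C^k$ (so $d_1f$ is $C^{k-1}$), composition and evaluation of $C^{k-1}$-maps are $C^{k-1}$, and $g$ is $C^{k-1}$ by the inductive hypothesis. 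Hence $dg$ is $C^{k-1}$, which by the characterization recalled in \ref{defCr} gives that $g$ is $C^k$; the case $k=\infty$ follows since being $C^\infty$ means being $C^k$ for all finite~$k$.

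The main obstacle I anticipate is not the algebra of the derivative formulas but the care required at the boundary: because $U$ has merely dense interior and $g$ is only assumed continuous a priori, I must argue on $U^0\times F$ first and then verify that each iterated derivative extends continuously to all of $U\times F$, in accordance with the definition of $C^k$ on non-open domains in \ref{defnonop}. The saving grace is that every derivative formula I derive has a right-hand side that is already defined and continuous on the full domain $U\times F$ (not just on the interior), so the required continuous extensions are simply the restrictions of these globally-defined expressions; the differentiation itself only needs to be carried out on the interior, where the standard Bastiani calculus applies directly.
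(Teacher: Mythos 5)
Your proposal follows essentially the same route as the paper's proof: the formula $d_2g(x,z;w)=g(x,w)$ coming from linearity of $g_x$, the formula $d_1g(x,z;y)=-g\big(x,d_1f(x,g(x,z);y)\big)$ extracted from the identity $f(x,g(x,z))=z$, the observation that these right-hand sides are already defined and continuous on all of $U\times F\times E$ (resp.\ $U\times F\times F$) so that \ref{rulepartdiff} and \ref{defnonop} yield the $C^1$-property, and the same bootstrap induction for higher~$k$.

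One caveat. Of the two justifications you offer for the existence of $d_1g$ on $U^0\times F$, only the first (the mean-value/integral argument) is viable; the second --- ``applying the known chain rule for $C^1$-maps on the interior and then extending by continuity'' --- is circular, since differentiating $x\mto f(x,g(x,z))$ by the chain rule presupposes exactly the differentiability of $g$ in the $U$-variable that is to be established. The paper's implementation of the first option is worth recording, because it shows that mere continuity of $g$ suffices: for $x\in U^0$, $x+ty\in U^0$, one uses $f_x(g(x,z))=z=f_{x+ty}(g(x+ty,z))$ to write
\[
f_x\Big(\frac{g(x+ty,z)-g(x,z)}{t}\Big)=\frac{(f_x-f_{x+ty})(g(x+ty,z))}{t}
=-\int_0^1 d_1f\big(x+sty,\,g(x+ty,z);\,y\big)\,ds,
\]
which by continuity of $g$ and \ref{pardepint} converges to $-d_1f(x,g(x,z);y)$ as $t\to 0$; applying the continuous linear map $(f_x)^{-1}=g(x,\sbull)$ to both sides then produces the limit of the difference quotients and hence the asserted formula for $d_1g$. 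In other words, the integral trick is applied to $f$ along the segment from $x$ to $x+ty$, with $g$ entering only through its continuity --- no differentiability or Lipschitz property of $g$ is invoked, which is precisely what makes the argument non-circular.
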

\begin{proof}
Since $g(x,\sbull)$ is continuous linear for $x\in U^0$, we have
\[
d_2g(x,z; w)=g(x,w)
\]
for all $x\in U^0$, $z,w\in F$, and the same formula defines a
continuous extension for $(x,z,w)\in U\times F\times F$.
Let $x\in U^0$, $y\in E$, $z\in F$ and $0\not=t\in \R$ with $x+ty\in U^0$.
Then
\begin{eqnarray}
f_x\left(\frac{g(x+ty,z)-g(x,z)}{t}\right) &=& \frac{f_x(g(x+ty,z))-z}{t}\label{brustohrauge}\\
&=&
\frac{(f_x-f_{x+ty})(g(x+ty,z))}{t}\notag \\
&=&-\int_0^1 d_1f(x+sty,g(x+ty,z);y)\,ds,\label{theintgal}
\end{eqnarray}
using that $f_x(g(x,z))=z=f_{x+ty}(g(x+ty,z))$.
By \ref{pardepint},
the integral in~(\ref{theintgal}) converges to $-\int_0^1 d_1f(x,g(x,z);y)\, ds=-d_1f(x,g(x,z); y)$
as $t\to 0$. Applying now the continuous linear map $(f_x)^{-1}$ to (\ref{brustohrauge}) and (\ref{theintgal}),
we see that
\begin{eqnarray*}
\frac{g(x+ty,z)-g(x,z)}{t} & \to&  -(f_x)^{-1}(d_1f(x,g(x,z);y))\\
&=&-g(x,d_1f(x,g(x,z);y))
\end{eqnarray*}
as $t\to 0$. Thus $d_1g(x,z; y)=g(x,d_1f(x,g(x,z);y))$
and the same formula defines a continuous $F$-valued function of $(x,z,y)\in U\times F\times E$.
Using~\ref{rulepartdiff}, we find that $g$ is $C^1$, with
\[
dg(x,z,y,w)=d_1g(x,z;y)+d_2g(x,z;w)=g(x,w)-g(x,d_1f(x,g(x,z); y))
\]
for $(x,z,y,w)\in U\times F\times E\times F$. Now if $g$ is $C^{k-1}$ by induction,
then the preceding formula shows that also $dg$ is $C^{k-1}$, and thus $g$ is $C^k$.
\end{proof}
We shall use a result on the parameter-dependence of fixed points.
\begin{la}\label{pardepfix}
Let $E$ be a locally convex space, $(F,\|.\|)$ a Banach space,
$P\sub E$ and $V\sub F$ be locally convex subsets with dense interior,
$k\in\N_0\cup\{\infty\}$
and
\[
f\colon P\times V\to F
\]
be a $C^k$-map which defines a ``uniform family of contractions"
in the sense that $f_p:=f(p,\sbull)\colon V\to F$ is Lipschitz for each $p\in P$ with
\[
\theta:=\sup_{p\in P}\Lip(f_p)\;<1.
\]
Assume that $f_p$ has a $($necessarily unique$)$ fixed point $x_p$ for each $p\in P$
and define $\phi\colon P\to V$, $\phi(p):=x_p$.
Then the following holds:
\begin{itemize}
\item[\rm(a)]
If $V$ is open,
then $\phi\colon P\to V$ is $C^k$.
\item[\rm(b)]
If $\phi\colon P\to V$ is continuous and $k\geq 2$, then $\phi$ is $C^k$.
\item[\rm(c)]
If $\phi$ is continuous, $f$ is $C^1$
and the map
\begin{equation}\label{byhand}
g\colon P\times V\to (\cL(F),\|.\|_{op}),\quad (p,x)\mto d_2f(p,x;\sbull)
\end{equation}
is continuous, then $\phi$ is $C^1$.
\end{itemize}
\end{la}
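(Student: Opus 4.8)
The plan is to differentiate the defining relation $\phi(p)=f(p,\phi(p))$ implicitly. Everything hinges on the observation that the contraction hypothesis controls the partial differential in operator norm: for $p\in P^0$, $x\in V^0$ and $w\in F$ the difference quotients $\frac1s\big(f(p,x+sw)-f(p,x)\big)$ have norm $\le\theta\|w\|$, so $\|d_2f(p,x;\sbull)\|_{op}\le\theta<1$, and this persists on all of $P\times V$ by continuity of $d_2f$. Writing $A(p,x):=d_2f(p,x;\sbull)\in\cL(F)$, the operator $\id_F-A(p,x)$ is thus invertible, with $B(p,x):=(\id_F-A(p,x))^{-1}=\sum_{n\ge0}A(p,x)^n$ satisfying $\|B(p,x)\|_{op}\le(1-\theta)^{-1}$ uniformly in $(p,x)$. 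I would also record the estimate $(1-\theta)\|\phi(p)-\phi(q)\|\le\|f(p,\phi(q))-f(q,\phi(q))\|$, which makes $\phi$ continuous whenever $f$ is, supplying the continuity used in~(a).

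The core is the $C^1$-assertion. I would fix $p\in P^0$, $y\in E$ and, for small $t\ne0$, set $q:=p+ty$, $\Delta:=\phi(q)-\phi(p)$. Decomposing $\Delta=f(q,\phi(q))-f(p,\phi(p))$ into a $P$- and a $V$-increment and using the fundamental theorem of calculus in each variable (legitimate for small $t$, since $P,V$ are locally convex and $\phi$ is continuous) yields
\[
\Delta=t\,c(t)+\tilde A(t)\,\Delta,\quad c(t):=\int_0^1 d_1f(p+sty,\phi(q);y)\,ds,\ \ \tilde A(t):=\int_0^1 d_2f(p,\phi(p)+s\Delta;\sbull)\,ds .
\]
Since $\|\tilde A(t)\|_{op}\le\theta$, I can solve $\frac1t\Delta=(\id_F-\tilde A(t))^{-1}c(t)$, where the inverses are bounded by $(1-\theta)^{-1}$. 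Letting $t\to0$ (so $\Delta\to0$), with $c(t)\to d_1f(p,\phi(p);y)$ by~\ref{pardepint} and $\tilde A(t)\to A(p,\phi(p))$ by continuity of $d_2f$, the uniform bound forces
\[
d\phi(p;y)=B(p,\phi(p))\,d_1f(p,\phi(p);y).
\]
Under the hypothesis of~(c) the map $g$ is continuous, so $p\mapsto A(p,\phi(p))$ is continuous in operator norm and hence, by continuity of inversion in $\cL(F)$, so is $p\mapsto B(p,\phi(p))$; together with continuity of $d_1f$ and $\phi$ this shows the right-hand side extends continuously to $P\times E$, whence~\ref{rulepartdiff} and~\ref{defnonop} give $\phi\in C^1$.

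For~(b), when $k\ge2$ Lemma~\ref{C2fprime}, applied to $f$, makes its total differential, and hence $d_2f$, continuous into $\cL(F)$ with the operator norm; thus $g$ is continuous and~(c) yields $\phi\in C^1$ with the formula above. To gain higher regularity I would apply Lemma~\ref{invlinpar} (with index $k-1$) to the $C^{k-1}$-map $\hat f((p,x),w):=w-d_2f(p,x;w)$, whose partial maps $\id_F-A(p,x)$ are linear bijections with continuous inverse $(p,x,z)\mapsto B(p,x)z$ (continuity of this inverse coming from the operator-norm continuity of $A$, cf.\ Lemma~\ref{autctsinv}); the lemma then shows $(p,x,z)\mapsto B(p,x)z$ is $C^{k-1}$. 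Inserting this into $d\phi(p;y)=B(p,\phi(p))d_1f(p,\phi(p);y)$ and inducting — if $\phi$ is $C^j$ with $1\le j\le k-1$, then the right-hand side is a composite of $C^j$-maps, so $d\phi$ is $C^j$ and $\phi$ is $C^{j+1}$ — gives $\phi\in C^k$. Part~(a) is then immediate: for $k=0$ the continuity estimate suffices; for $k\ge1$ with $V$ open the fixed point lies in $V^0$, so the computation of~(c) applies verbatim and the continuity of $p\mapsto B(p,\phi(p))$ needed for $C^1$ follows from the strong continuity of $A$ together with the uniform bound, after which the bootstrap of~(b) upgrades this to $C^k$.

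The delicate point will be the passage to the limit in the $V$-increment: the operator $\tilde A(t)$ is applied to the shrinking vector $\Delta$, so continuity of $d_2f$ in its base point does not by itself annihilate the error. It is exactly the uniform contraction bound $\|\tilde A(t)\|_{op}\le\theta$, i.e.\ the uniform estimate $(1-\theta)^{-1}$ on the inverses, that tames this remainder and lets the difference quotient converge. Keeping every differential object inside the Banach algebra $\cL(F)$ with its operator norm — rather than a weaker topology — is also what makes Lemma~\ref{invlinpar} applicable and the induction in~(b) close.
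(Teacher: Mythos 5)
Your proof is correct, and while its skeleton matches the paper's -- the same resolvent formula $d\phi(p;y)=(\id_F-d_2f(p,\phi(p);\sbull))^{-1}d_1f(p,\phi(p);y)$, the same reduction of (b) to (c) via Lemma~\ref{C2fprime}, and the same bootstrap through Lemma~\ref{invlinpar} -- the decisive analytic step is handled genuinely differently. The paper never proves differentiability of $\phi$ itself: it takes part (a) as an external input (cited from \cite{FRE}, Theorem D), uses it to know that $\phi|_{P^0}$ is already $C^k$, and then only has to differentiate the identity $f(p,\phi(p))=\phi(p)$ by the chain rule, solve for $d\phi$, and extend the resulting formula continuously to $P\times E$. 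You instead establish the existence of the directional derivatives from scratch, via the two-increment decomposition $\Delta=t\,c(t)+\widetilde{A}(t)\Delta$ and the uniform bound $\|(\id_F-\widetilde{A}(t))^{-1}\|_{op}\le(1-\theta)^{-1}$, which is exactly what lets the difference quotient converge although $\widetilde{A}(t)$ converges only strongly. What this buys is self-containedness: your computation simultaneously yields a proof of (a) for $k\ge 1$ (with $k=0$ covered by your $(1-\theta)$-estimate, which moreover shows that the continuity hypotheses on $\phi$ in (b) and (c) are automatic), whereas the paper outsources this hardest step; the price is the more delicate limit argument, which you execute correctly and whose crux you correctly identify. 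Two cosmetic repairs: the pointer ``cf.\ Lemma~\ref{autctsinv}'' is inapt (that lemma concerns compact codomains and plays no role here; the justification you actually give -- operator-norm continuity of $A$, or in part (a) strong continuity plus the uniform bound via the resolvent identity -- is the right one); and since $d_2f$ is a priori only strongly continuous, the operator-valued integral $\widetilde{A}(t)$ should be defined pointwise, $\widetilde{A}(t)w:=\int_0^1 d_2f(p,\phi(p)+s\Delta;w)\,ds$, which still satisfies $\|\widetilde{A}(t)\|_{op}\le\theta$ and converges strongly as $t\to 0$, so all your estimates go through.
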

\begin{proof}
(a) is a special case of \cite[Theorem D]{FRE}.

(b) and (c). It suffices to consider $k\in \N$. Let $\lambda\colon F\to E\times F$, $x\mto (0,x)$ be the continuous linear inclusion
map. Then the linear mapping
\[
\cL(\lambda, F)\colon \cL(E\times F,F)\to\cL(F,F),\quad A\mto A\circ \lambda
\]
is continuous.
If $f$ is $C^2$, then the map $g=\cL(\lambda,F)\circ f'$ from (\ref{byhand}) is continuous
as a consequence of Lemma~\ref{C2fprime}.
We may therefore assume continuity of~$g$ now and prove the assertion by induction on
$k\in\N$.
Note that
\[
\|d_2f(p,x;\sbull)\|_{op}\leq\theta < 1
\]
since $\Lip(f_p)\leq\theta$. Hence $\id_F-d_2f(p,x;\sbull)\in\GL(F)$
for all $(p,x)\in P\times V$. The inversion map $\GL(F)\to\GL(F)$, $A\mto A^{-1}$ being continuous, also
\[
P\times V\to \cL(F),\quad (p,x)\mto (\id_F-g(p,x))^{-1}
\]
is continuous and hence also the map
\begin{equation}\label{firstdramp}
P\times V\times F\to F,\quad (p,x,z)\mto (\id_F-g(p,x))^{-1}(z),
\end{equation}
as the evaluation map $\cL(F)\times F\to F$, $(A,x)\mto A(x)$ is continuous.
Since
\[
h\colon (P\times V)\times F\to F,\quad (p,x,y)\mto y-d_2f(p,x;y)
\]
is $C^{k-1}$, writing $h_{p,x}:=h(p,x,\sbull)$ we deduce with Lemma~\ref{invlinpar} that the map
\[
\Theta \colon P\times V\times F\to F,\quad (p,x,z)\mto (h_{p,x})^{-1}(z)
\]
(which coincides with the map in (\ref{firstdramp}))
is~$C^{k-1}$.
We know from (a) that $\phi|_{P^0}$ is $C^k$.
Now
\begin{equation}\label{bothsid}
f(p,\phi(p))=\phi(p).
\end{equation}
Using the Chain Rule, we can form the directional derivative at $p\in P^0$ in a direction
$q\in E$ on both sides of~(\ref{bothsid}) and obtain the identity
\[
d_1f(p,\phi(p);q)+d_2f(p,\phi(p);d\phi(p,q))=d\phi(p,q),
\]
which can be solved for $d\phi(p,q)$:
\[
d\phi(p,q)=(\id_F-d_2f(p,\phi(p);\sbull)^{-1}(d_1f(p,\phi(p);q)=\Theta  (p,\phi(p),d_1f(p,\phi(p);q)).
\]
Note that term on the right hand side also defines a continuous $F$-valued function
of $(p,q)\in P\times E$. Hence $\phi$ is $C^1$ with
\[
d\phi(p,q)=\Theta (p,\phi(p),d_1f(p,\phi(p);q)).
\]
Now if $\phi$ is $C^{k-1}$ by induction, then the previous identity shows that
also $d\phi$ is $C^{k-1}$, and thus $\phi$ is $C^k$.
\end{proof}
\section{Submanifold structure and global chart}\label{sec3}
Let $K$ be a compact convex subset with non-empty interior in $E:=\R^n$
and $r\in \N\cup\{\infty\}$.
We show:
\begin{la}\label{issubmfd}
$\Diff^{C^r}_{\partial K}(K)$ is a smooth submanifold of
$C^r(K,E)$ modelled on the closed vector subspace $C^r_{\partial K}(K,E)$
of $C^r(K,E)$. Moreover, $\Diff^{C^r}_{\partial K}(K)$ admits a global chart.
\end{la}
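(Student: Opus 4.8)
The global chart will of course be the map $\Theta\colon\phi\mto\phi-\id_K$, with inverse $\gamma\mto\id_K+\gamma$, so the entire lemma reduces to a single openness statement: setting $\Omega:=\{\phi-\id_K\colon\phi\in\Diff^{C^r}_{\partial K}(K)\}\sub C^r_{\partial K}(K,E)$, it suffices to show that $\Omega$ is open in the closed subspace $C^r_{\partial K}(K,E)$. Indeed, once this is known, $\Theta$ is a bijection onto the open set $\Omega$ and exhibits $\Diff^{C^r}_{\partial K}(K)=\id_K+\Omega$ as an open subset of the closed affine subspace $\id_K+C^r_{\partial K}(K,E)$ of $C^r(K,E)$, which is the asserted submanifold structure with global chart. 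My plan is to first produce an explicit open $0$-neighbourhood contained in $\Omega$, and then to move it to an arbitrary point of $\Omega$ by a pullback (right-translation) argument, using that $\Diff^{C^r}_{\partial K}(K)$ is closed under composition.

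For the neighbourhood of $0$, I would consider $U:=\{\gamma\in C^r_{\partial K}(K,E)\colon\|\gamma'\|_{\infty,op}<1\}$, which is open because $\gamma\mto\|\gamma'\|_{\infty,op}$ is a continuous seminorm on $C^r(K,E)$ by~\ref{Dcts}. Fix $\gamma\in U$ and put $\phi:=\id_K+\gamma$; since $\gamma|_{\partial K}=0$ we have $\phi|_{\partial K}=\id_{\partial K}$, and $\phi\in C^r(K,E)$ trivially. Integrating $\gamma'$ along segments (which lie in $K$ by convexity) gives $\Lip(\gamma)\le\|\gamma'\|_{\infty,op}<1$, whence $\|\phi(x)-\phi(y)\|\ge(1-\Lip(\gamma))\|x-y\|$ proves injectivity. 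That $\phi$ maps $K$ into $K$ is where convexity and the boundary condition interact: for $x\in K^0$ one has $\operatorname{dist}(x,\partial K)=\operatorname{dist}(x,\R^n\setminus K)=:d$, so the open ball $B_d(x)$ lies in $K$; choosing a nearest boundary point $p$ gives $\|\gamma(x)\|=\|\gamma(x)-\gamma(p)\|\le\Lip(\gamma)\,d<d$, hence $\phi(x)\in B_d(x)\sub K$. Since $\|\gamma'(x)\|_{op}<1$, the differential $\id_E+\gamma'(x)$ is invertible by a Neumann series, so by the classical inverse function theorem $\phi|_{K^0}$ is a local $C^r$-diffeomorphism; thus $\phi(K^0)$ is open in $\R^n$ and, being contained in $K$, lies in $K^0$. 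As $\phi(K^0)=\phi(K)\cap K^0$ is also closed in the connected set $K^0$, it equals $K^0$, giving $\phi(K)=K$.

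The main obstacle is to verify that the (homeomorphic) inverse $\phi^{-1}$ is $C^r$ on the \emph{non-open} set $K$ in the sense of~\ref{defnonop}; this is precisely an inverse-function-theorem phenomenon on $K$. On $K^0$ the map $\phi^{-1}$ agrees with the classical inverse $(\phi|_{K^0})^{-1}$, hence is $C^r$ there, with $(\phi^{-1})'(y)=(\id_E+\gamma'(\phi^{-1}(y)))^{-1}$. I would show that all iterated derivatives of $\phi^{-1}|_{K^0}$ extend continuously to $K$ by induction on the order: each such derivative is, via the chain rule and the formula for differentiating inverses, a universal expression in the derivatives of $\gamma$ evaluated at $\phi^{-1}(y)$, in inverses of $\id_E+\gamma'(\phi^{-1}(y))$, and in lower derivatives of $\phi^{-1}$. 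Since $\gamma\in C^r$ has all derivatives up to order $r$ continuous on $K$, since $\phi^{-1}$ is continuous on $K$, and since matrix inversion is continuous on $\GL(E)$, every such expression extends continuously from $K^0$ to $K$. Hence $\phi^{-1}\in C^r(K,E)$ with $\phi^{-1}|_{\partial K}=\id_{\partial K}$, so $\phi\in\Diff^{C^r}_{\partial K}(K)$ and $U\sub\Omega$.

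Finally, to get openness of $\Omega$ at an arbitrary point, let $\gamma_0\in\Omega$ and $\phi_0:=\id_K+\gamma_0\in\Diff^{C^r}_{\partial K}(K)$, so that $\phi_0^{-1}$ is $C^r$ and preserves $\partial K$. For $\gamma\in C^r_{\partial K}(K,E)$ set $\delta:=(\gamma-\gamma_0)\circ\phi_0^{-1}$; a direct computation gives $(\id_K+\delta)\circ\phi_0=\id_K+\gamma$. The pullback $\eta\mto\eta\circ\phi_0^{-1}$ is a continuous linear self-map of $C^r_{\partial K}(K,E)$ (see \cite{GaN}), so $\gamma\mto\delta$ is continuous and sends $\gamma_0$ to $0$; therefore the set $N:=\{\gamma\colon\delta\in U\}$ is an open neighbourhood of $\gamma_0$. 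For $\gamma\in N$ we have $\id_K+\delta\in\Diff^{C^r}_{\partial K}(K)$ by the previous paragraphs, and composing with $\phi_0$ (which keeps us inside the group) yields $\id_K+\gamma=(\id_K+\delta)\circ\phi_0\in\Diff^{C^r}_{\partial K}(K)$, i.e. $\gamma\in\Omega$. Thus $N\sub\Omega$, proving $\Omega$ open and completing the proof.
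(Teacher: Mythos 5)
Your proof is correct, but your route to the key openness statement differs genuinely from the paper's. The paper characterizes $\Omega_r:=\Diff^{C^r}_{\partial K}(K)-\id_K$ exactly as the set $Q_r$ of all $\gamma\in C^r_{\partial K}(K,E)$ such that $\gamma'(K)\sub\GL(E)-\id_E$, $x_0+\gamma(x_0)\in K^0$ for one fixed interior point $x_0$, and $\id_K+\gamma$ is injective; it then proves openness directly at \emph{every} point: given $\gamma\in\Omega_r$ with $\phi:=\id_K+\gamma$ and $a:=\Lip(\phi^{-1})$, every $\eta$ satisfying the first two conditions and $\|\eta'-\gamma'\|_{\infty,op}<\frac{1}{2a}$ obeys $\|(\id_K+\eta)(y)-(\id_K+\eta)(x)\|\geq\frac{1}{2a}\|y-x\|$, hence is injective and lies in $Q_r$. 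You instead verify membership only on the explicit ball $U=\{\gamma\colon\|\gamma'\|_{\infty,op}<1\}$ about $0$ --- where your estimate $\|\gamma(x)\|\leq\Lip(\gamma)\,d_{\partial K}(x)$ substitutes for the paper's single-point condition $\phi(x_0)\in K^0$; this estimate appears in the paper only later, as (\ref{estsizedist}) in Section~\ref{sec6} --- and then transport $U$ to an arbitrary $\gamma_0\in\Omega$ by the right-translation map $\gamma\mapsto\delta:=(\gamma-\gamma_0)\circ\phi_0^{-1}$. That is the standard homogeneity trick: it concentrates the quantitative analysis at the identity, but it costs an ingredient the paper's Section~\ref{sec3} argument deliberately avoids, namely continuity of the pullback operator $\eta\mapsto\eta\circ\phi_0^{-1}$ on $C^r_{\partial K}(K,E)$, together with the chain rule on non-open domains (needed to see that $\delta$ is again $C^r$ and that the group is closed under composition). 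These facts are indeed available in \cite{GaN}, and within the paper they amount to the continuity ($s=0$) and linear-in-the-first-argument cases of Lemma~\ref{premultHamz}; since the proof of those statements about $\Gamma$ does not invoke Lemma~\ref{issubmfd}, your argument is not circular, though it inverts the paper's ordering of Sections~\ref{sec3} and~\ref{sec4}. What the paper's approach buys is the explicit description $\Omega_r=Q_r$ of the group as a byproduct and the self-containedness of Section~\ref{sec3}; what yours buys is that the perturbation analysis is needed at one point only. The remaining core is shared --- inverse function theorem plus connectedness of $K^0$ to get $\phi(K^0)=K^0$, then an induction on the order of derivatives to conclude $\phi^{-1}\in C^r(K,E)$ --- though your ``universal expression'' induction is sketchier than the paper's treatment via \cite{AaS}; it does go through, because $\id_E+\gamma'(x)\in\GL(E)$ holds on all of $K$ (not merely on $K^0$), all derivatives of $\gamma$ up to order $r$ extend continuously to $K$, and $\phi^{-1}$ is continuous on $K$, so each formula for $d^{(k)}(\phi^{-1}|_{K^0})$ extends continuously to $K\times E^k$, exactly as \ref{defnonop} requires.
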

\begin{proof}
The affine vector subspace $\id_K+C^r_{\partial K}(K,E)$
is a $C^\infty$-submanifold of $C^r(K,E)$ modelled on the closed vector subspace
$C^r_{\partial K}(K,E)$ of $C^r(K,E)$, as
\[
\Psi\colon C^r(K,E)\to C^r(K,E),\quad \phi\mto \phi-\id_K
\]
is a $C^\infty$-diffeomorphism (and hence a global chart for $C^r(K,E)$)
such that
\[
\Psi(\id_K+C^r_{\partial K}(K,E))=C^r_{\partial K}(K,E).
\]
We claim that
\[
\Omega_r:=\Diff^{C^r}_{\partial K}(K)-\id_K
\]
is open in $C^r_{\partial K}(K,E)$. If this is true, then $\Diff^{C^r}_{\partial K}(K)$ will be (relatively) open
in the submanifold $\id_K+C^r_{\partial K}(K,E)$ of $C^r(K,E)$, entailing that also $\Diff^{C^r}_{\partial K}(K)$
is a submanifold of $C^r(K,E)$ modelled on $C^r_{\partial K}(K,E)$.
Moreover,
\[
\Phi_r\colon \Diff^{C^r}_{\partial K}(K)\to\Omega_r,\quad \phi\mto\phi-\id_K
\]
will be a global chart for $\Diff^{C^r}_{\partial K}(K)$ (establishing the lemma).
To prove the claim,
let $x_0\in K^0$.
Fix a norm $\|.\|$ on $E=\R^n$ and write
\[
\|\eta\|_{\infty,op}:=\sup_{x\in K}\|\eta(x)\|_{op}
\]
for continuous functions $\eta\colon K\to \cL(E)$.
Let
\[
Q_r\sub C^r_{\partial K}(K,E)
\]
be the set of all $\gamma\in C^r_{\partial K}(K,E)$ such that
$\gamma'(K)\sub  \GL(E)-\id_E$ holds,
$x_0+\gamma(x_0)  \in  K^0$, and
$\id_K+\gamma$ is injective.
It is clear that
$\phi-\id_K\in Q_r$
for each $\phi\in \Diff^{C^r}_{\partial K}(K)$
and thus
\[
\Omega_r\sub Q_r.
\]
To see that $\Omega_r=Q_r$,
let $\gamma\in Q_r$. Then $\phi:=\id_K+\gamma\colon K\to E$
is injective. We show that $\phi\in\Diff^{C^r}_{\partial K}(K)$
(whence $\gamma=\phi-\id_K\in\Omega_r$).
Since
\[
\phi'(x)=\id_E+\gamma'(x)\in\GL(E)
\]
for each $x\in K^0$ and $\phi|_{K^0}$ is injective,
we deduce from the inverse function theorem that $\phi(K^0)$ is open in~$E$
and $\phi|_{K^0}\colon K^0\to\phi(K_0)$ is a $C^r$-diffeomorphism.
Since $\gamma|_{\partial K}=0$, we have $\phi|_{\partial K}=\id_{\partial K}$
and hence
\[
\phi(\partial K)=\partial K.
\]
As $\phi$ is injective, this implies that
\[
\phi(K^0)\cap\partial K=\emptyset.
\]
Therefore $K^0$ is the disjoint union of the open set $\phi^{-1}(E\setminus K)$
and the open set $\phi^{-1}(K^0)$ which is non-empty as $\phi(x_0)\in K^0$ by definition of~$Q_r$.
Since $K^0$ is convex and hence connected, we deduce that $\phi^{-1}(E\setminus K)=\emptyset$
and hence
\[
\phi(K^0)\sub K^0.
\]
Now $\phi(K^0)$ is open in $K^0$ but it is also (relatively) closed in~$K^0$
since
\[
\phi(K^0)=\phi(K^0)\cup \underbrace{K^0\cap \phi(\partial K)}_{=\emptyset}=
K^0\cap \phi(K)
\]
where $\phi(K)$ is compact and hence closed in~$E$.
Since $\phi(K^0)$ is non-empty, the connectedness of $K^0$ implies that $\phi(K^0)=K^0$.
Thus $\phi$ is surjective and hence $\phi$ (being also injective) is a bijection.
Since $K$ is compact, the continuous bijection $\phi\colon K\to K$ is a homeomorphism.
As $\phi$ is $C^r$, the map $d\phi\colon K\times E\to E$ is $C^{r-1}$
and hence $C^{r-1,0}$ (cf.\ \cite[Lemma 3.17]{AaS}),
entailing that
\[
\phi'=(d\phi)^\vee\colon K\to \GL(E)\sub C(E,E)
\]
is $C^{r-1}$ (see \cite[Theorem 3.25\,(a)]{AaS}).
We know that
\[
\phi^{-1}|_{K^0}=(\phi|_{K^0})^{-1}
\]
is $C^r$ and
\[
(\phi^{-1})'(y)=(\phi'(\phi^{-1}(y)))^{-1}
\]
for each $y\in K^0$.
As the inversion map $\GL(E)\to \GL(E)$ is smooth and hence continuous, we deduce that
the map
\begin{equation}\label{concretef}
\psi\colon K\to \GL(E),\quad y\mto (\phi'(\phi^{-1}(y)))^{-1}
\end{equation}
is continuous. Because the evaluation map $\ve\colon \cL(E)\times E\to E$ is continuous bilinear
(and thus smooth, for later use), we see that also the map
\[
\wh{\psi}\colon K\times E\to E,\quad (y,z)\mto \psi(y)(z)=\ve(\psi(y),z)
\]
is continuous. By the preceding, $\wh{\psi}\colon K\times E\to E$ is a continuous extension of
$d(\phi^{-1}|_{K^0})$. Hence $\phi^{-1}$ is $C^1$ with $d(\phi^{-1})=\wh{\psi}$.
If now $\phi^{-1}$ is $C^{k-1}$ by induction for an integer $2\leq k\leq r$,
then (\ref{concretef}) and the Chain Rule entail that~$\psi$ is~$C^{k-1}$.
Hence $\wh{\psi}=d(\phi^{-1})$ is $C^{k-1,0}$ (see \cite[Theorem 3.28]{AaS}), whence $d(\phi^{-1})$ is $C^{k-1,\infty}$
by linearity in the second argument (see \cite[Lemma 3.14]{AaS}). Thus $d(\phi^{-1})$ is $C^{k-1}$, by
\cite[Lemma 3.15]{AaS},
entailing that~$\phi^{-1}$ is $C^k$ and thus~$C^r$.
Hence $\phi\in\Diff_{\partial K}^{C^r}(K)$, as desired.\\[2.3mm]
To see that $\Omega_r$ is open, note that
the point evaluation $\ve_{x_0}\colon C^r_{\partial K}(K,E)\to E$, $\gamma\mto\gamma(x_0)$ is continuous
linear.
Moreover, the map
\[
C^r(K,E)\to C(K,\cL(E)),\quad \gamma\mto \gamma'
\]
is continuous linear and $\GL(E)-\id_E$ is open
in $\cL(E)$. Hence
\[
U :=\{\gamma\in C^r_{\partial K}(K,E)\colon \gamma'(K)\sub \GL(E)-\id_E\;\mbox{and}\;
\gamma(x_0)\in K^0-x_0\}
\]
is open in $C^r_{\partial K}(K,E)$.
Let $\gamma\in \Omega_r$; then $\phi:=\id_K+\gamma\in \Diff^{C^r}_{\partial K}(K)$
and thus
\[
a:= \Lip(\phi^{-1}) = \|(\phi^{-1})'\|_{\infty,op}<\infty.
\]
Now
\[
V:=\Big\{\eta\in U\colon \|\eta'-\gamma'\|_{\infty,op}<\frac{1}{2a}\Big\}
\]
is an open neighbourhood of $\gamma$ in~$U$ (and hence also in $C^r_{\partial K}(K,E)$).
We show that $V\sub Q_r=\Omega_r$ (completing the proof that $\Omega_r$ is open).
The estimate
\[
\|y-x\|=\|\phi^{-1}(\phi(y))-\phi^{-1}(\phi(x))\|\leq\Lip(\phi^{-1})\|\phi(y)-\phi(x)\|=a\|\phi(y)-\phi(x)\|
\]
implies that
$\|\phi(y)-\phi(x)\|\geq \frac{1}{a}\|y-x\|$ for all $x,y\in K$.
For $\eta\in V$ and $x,y\in K$ with $x\not=y$, this entails
\begin{eqnarray*}
\frac{1}{a}\|y-x\|&\leq & \|\phi(y)-\phi(x)\|\\
&\leq& \|(\id_K+\eta)(y)-(\id_K+\eta)(x)\|+\|(\eta-\gamma)(y)-(\eta-\gamma)(x)\|\\
&\leq & \|(\id_K+\eta)(y)-(\id_K+\eta)(x)\|+\Lip(\eta-\gamma)\|y-x\|\\
&\leq & \|(\id_K+\eta)(y)-(\id_K+\eta)(x)\|+\frac{1}{2a}\|y-x\|,
\end{eqnarray*}
using that $\Lip(\eta-\gamma)=\|\gamma'-\eta'\|_{\infty,op}<\frac{1}{2a}$. Hence
\[
\|(\id_K+\eta)(y)-(\id_K+\eta)(x)\|\geq \frac{1}{2a}\|y-x\|>0
\]
and thus $x+\eta(x)\not= y+\eta(y)$. Thus $\id_K+\eta$ is injective, whence
$\eta\in Q_r$ and hence $V\sub Q_r$.
\end{proof}
\section{Smoothness of composition}\label{sec4}
In this section, we establish continuity and differentiability properties
of the composition map $\Diff^{C^r}_{\partial K}(K) \times \Diff^{C^r}_{\partial K}(K)\to \Diff^{C^r}_{\partial K}(K)$,
and related results.
\begin{la}\label{premultHamz}
Let $K$ be a compact convex subset of $E:=\R^n$
with non-empty interior, $V$ a closed convex subset of a finite-dimensional
vector space~$E_2$ with non-empty interior, and $F$ be a locally convex space.
For all $r,s\in \N_0\cup\{\infty\}$,
\[
C^r(K,V):=\{\eta\in C^r(K,E_2)\colon\eta(K)\sub V\}
\]
is a convex subset of $C^r(K,E_2)$ with non-empty interior
and
\[
\Gamma\colon C^{r+s}(V,F)\times C^r(K,V)\to C^r(K,F),\quad \Gamma(\gamma,\eta):=\gamma\circ\eta
\]
is a $C^s$-mapping. If $s\geq 1$, then
\begin{equation}\label{dGamma}
d\Gamma(\gamma,\eta;\gamma_1,\eta_1)=\gamma_1\circ\eta +d\gamma\circ (\eta,\eta_1)
\end{equation}
for all $\gamma,\gamma_1\in C^{r+s}(V,F)$, $\eta\in C^r(K,V)$, and $\eta_1\in C^r(K,E_2)$.
In particular,
\[
\mu\colon \Diff^{C^{r+s}}_{\partial K}(K) \times \Diff^{C^r}_{\partial K}(K)\to \Diff^{C^r}_{\partial K}(K),\quad (\gamma,\eta)
\mto\gamma\circ\eta
\]
is $C^s$, for all $r\in\N\cup\{\infty\}$ and $s\in\N_0\cup\{\infty\}$. For each $\eta\in\Diff^{C^r}_{\partial K}(K)$,
the right translation map $\rho_\eta\colon \Diff^{C^r}_{\partial K}(K)\to\Diff^{C^r}_{\partial K}(K)$,
$\gamma\mto\gamma\circ\eta$ is smooth.
\end{la}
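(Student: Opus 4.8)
The plan is to prove everything at once by realizing $\Gamma$ as the ``adjoint'' of a single evaluation-type map and running an induction on $s$. First, though, the structural claims about $C^r(K,V)$: convexity is immediate, since a convex combination of two $V$-valued maps is again $V$-valued by convexity of $V$. For the interior I would fix $v_0\in V^0$ and take the constant map $\eta_0\equiv v_0$; as $\eta_0(K)=\{v_0\}$ is compact inside the open set $V^0$, it has positive distance to $E_2\setminus V$, so every $\eta$ with $\|\eta-\eta_0\|_\infty$ small maps into $V$, and since the compact-open $C^r$-topology refines the supremum topology, $\eta_0$ is interior. Density of the interior follows by pushing an arbitrary $\eta\in C^r(K,V)$ toward $\eta_0$ along $\eta_\lambda:=(1-\lambda)\eta+\lambda\eta_0$: convexity gives $\eta_\lambda(K)\sub V^0$ for $\lambda\in(0,1]$, so each $\eta_\lambda$ is interior, while $\eta_\lambda\to\eta$ in $C^r(K,E_2)$. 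This puts the domain into the convex-with-dense-interior setting required by the $C^{r,s}$-machinery.

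The key reduction is the exponential law. I would introduce
\[
\Psi\colon\bigl(C^{r+s}(V,F)\times C^r(K,V)\bigr)\times K\to F,\quad \Psi((\gamma,\eta),x):=\gamma(\eta(x)),
\]
and observe that $\Psi^\vee=\Gamma$. Since $K$ is compact, \cite[Theorem 3.25\,(a)]{AaS} reduces ``$\Gamma$ is $C^s$'' to ``$\Psi$ is $C^{s,r}$'' (i.e.\ $C^s$ in the pair $(\gamma,\eta)$ and $C^r$ in $x$), and differentiating $\Psi$ in the parameter direction is what will produce formula~(\ref{dGamma}). Writing $\Psi=\mathrm{ev}_V\circ\beta$ with $\mathrm{ev}_V(\gamma,v):=\gamma(v)$ and $\beta((\gamma,\eta),x):=(\gamma,\eta(x))$, I note that $\mathrm{ev}_V\colon C^{r+s}(V,F)\times V\to F$ is $C^{\infty,r+s}$ by \cite[Theorem 3.28\,(a)]{AaS} applied to the (smooth) identity of $C^{r+s}(V,F)$, using that $V$ is locally compact as a closed subset of the finite-dimensional space $E_2$; likewise the evaluation $C^r(K,E_2)\times K\to E_2$ is $C^{\infty,r}$, so $\beta$ is smooth in the parameter and $C^r$ in $x$.

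The heart of the argument is establishing $C^{s,r}$; I would do this by induction on $s$ (simultaneously for all $r$), working directly with $\Gamma$. The base case $s=0$ is continuity of composition: via the exponential law it amounts to $\Psi$ being $C^{0,r}$, which I would get from continuity of $\Psi$ together with joint continuity of all base-derivatives $d_x^{(k)}(\gamma\circ\eta)$ for $k\le r$, obtained by expanding these by the higher chain rule into sums of terms $d^{(j)}\gamma(\eta(x);\dots)$ times derivatives of $\eta$, each continuous in all arguments since $\gamma\mapsto d^{(j)}\gamma$, evaluation, and $\eta\mapsto d^{(i)}\eta$ are continuous. For the inductive step I would compute, for $x+t\eta_1\in\cdots$ and $t\to0$,
\[
\frac{(\gamma+t\gamma_1)\circ(\eta+t\eta_1)-\gamma\circ\eta}{t}
=\frac{\gamma\circ(\eta+t\eta_1)-\gamma\circ\eta}{t}+\gamma_1\circ(\eta+t\eta_1)
\longrightarrow d\gamma\circ(\eta,\eta_1)+\gamma_1\circ\eta
\]
in $C^r(K,F)$, which yields~(\ref{dGamma}) and, once continuity of $d\Gamma$ is checked, the $C^1$-property. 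To see that $d\Gamma$ is $C^{s-1}$ \emph{without} circularity, I would read both summands as level-$(s-1)$ compositions: after the continuous-linear inclusion $C^{r+s}(V,F)\hookrightarrow C^{r+(s-1)}(V,F)$ the term $\gamma_1\circ\eta$ is $\Gamma$ at level $s-1$ applied to $(\gamma_1,\eta)$, while $d\gamma\circ(\eta,\eta_1)$ is $\Gamma$ at level $s-1$ applied to $(d\gamma,(\eta,\eta_1))$ with $d\gamma\in C^{r+(s-1)}(V\times E_2,F)$; both are $C^{s-1}$ by the induction hypothesis, precomposed with the continuous-linear maps $\gamma\mapsto d\gamma$ and the projections and pairing. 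Hence $d\Gamma$ is $C^{s-1}$, so $\Gamma$ is $C^s$, and the case $s=\infty$ follows since $C^\infty=\bigcap_s C^s$. I expect the main obstacle to be precisely this order bookkeeping—the reason $\gamma$ must be $C^{r+s}$ is that each of the up-to-$r$ base-derivatives and up-to-$s$ parameter-derivatives is funneled through $\eta(x)$ into the vector slot of $\mathrm{ev}_V$, costing one derivative of $\gamma$ each—together with the uniform joint-continuity estimates underlying the base case.

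Finally I would specialize. The map $\mu$ is the restriction of $\Gamma$ (with $V=K$, $E_2=F=\R^n$) to the open subsets $\Diff^{C^{r+s}}_{\partial K}(K)\times\Diff^{C^r}_{\partial K}(K)$ of the model (sub)spaces furnished by Lemma~\ref{issubmfd}; since composition preserves both the diffeomorphism property and the condition of fixing $\partial K$ pointwise, $\mu$ lands in $\Diff^{C^r}_{\partial K}(K)$ and is $C^s$. For fixed $\eta=\id_K+\delta$, the right translation reads in the global chart as $\gamma\mapsto\delta+\gamma\circ\eta$, which is affine in $\gamma$ with linear part the pullback $\gamma\mapsto\gamma\circ\eta$; the latter is continuous by the case $s=0$ and linear, hence smooth, so $\rho_\eta$ is smooth.
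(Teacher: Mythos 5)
Your proposal is correct, but its core runs along a genuinely different route than the paper's proof. Both arguments share the same frame: convexity and (dense) interior of $C^r(K,V)$ via constant maps and sup-norm estimates, the exponential-law reduction of ``$\Gamma$ is $C^s$'' to a statement about $\wh{\Gamma}((\gamma,\eta),x)=\gamma(\eta(x))$ via \cite[Theorem 3.25\,(a)]{AaS}, the factorization of $\wh{\Gamma}$ through the evaluation maps $\ve_1$ and $\ve$, and the identical specializations to $\mu$ (restriction of $\Gamma$ to submanifolds) and to $\rho_\eta$ (continuous linear in $\gamma$, hence smooth). The divergence is in how the differentiability of order $s$ is actually established. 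The paper applies the chain rule for mappings with different degrees of differentiability (\cite[Lemma~81]{Alz}, combined with \cite[Lemma~77]{Alz}) \emph{once} to the factorization, so that all order bookkeeping is delegated to that cited result, and it then obtains (\ref{dGamma}) separately, from linearity in $\gamma$ together with the pushforward identity $dC^r(K,\gamma)=C^r(K,d\gamma)$ from \cite{GaN}/\cite{FUN} and the rule on partial differentials \ref{rulepartdiff}. You instead run an induction on $s$: Fa\`{a} di Bruno plus joint continuity of evaluations for the base case $s=0$, a difference-quotient computation yielding (\ref{dGamma}) in the inductive step, and then the recursive observation that both summands of $d\Gamma$ are lower-level composition maps precomposed with continuous linear maps ($\gamma\mto d\gamma$, inclusions, and the pairing $(\eta,\eta_1)$) --- the scheme used for $\Diff_c(\R^n)$ in \cite{DRN}. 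Your route is more self-contained and makes transparent \emph{why} the order budget $r+s$ arises, whereas the paper is shorter by citation. The price of your route is that two points you pass over quickly need real work: (i) the convergence $\frac{1}{t}\bigl(\gamma\circ(\eta+t\eta_1)-\gamma\circ\eta\bigr)\to d\gamma\circ(\eta,\eta_1)$ in $C^r(K,F)$ should be justified by writing the quotient as $\int_0^1 d\gamma\circ(\eta+st\eta_1,\eta_1)\,ds$, first for $\eta$ in the dense interior $C^r(K,V^0)$ (so the segment stays in the domain), using the base-case continuity and continuity of parameter-dependent integrals (\ref{pardepint}), and then extending the resulting formula continuously to all of $C^r(K,V)$, as the non-open-domain calculus requires; and (ii) your induction hypothesis must be stated for all triples $(V,E_2,F)$ simultaneously, not just for fixed data, since the step invokes it with $V\times E_2$ in place of $V$ (for the term $d\gamma\circ(\eta,\eta_1)$). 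Neither point is an obstruction, so your proof goes through; it is simply longer where the paper cites, and more explicit where the paper appeals to \cite{Alz}.
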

\begin{proof}
It is clear that $C^r(K,V)$ is convex and $C^r(K,V^0)\sub C^r(K,V)^0$.
Actually, $C^r(K,V^0)=C^r(K,V)^0$ holds.\footnote{Suppose there is $\eta\in C^r(K,V)^0\setminus C^r(K,V^0)$; then $\eta(x_0)\in V\setminus V^0$
for some $x_0\in K$. Now $\eta+c\in C^r(K,V)^0$ for all $c$ in a $0$-neighbourhood $U\sub E_2$
(identified with the corresponding constant function $x\mto c$).
Evaluating at $x_0$, we see that $\eta(x_0)+U\sub V$ and hence $\eta(x_0)\in V^0$, contradiction.}\\[2mm]
By \cite[Theorem 3.25\,(a)]{AaS}, the map $\Gamma$ will be $C^s$ if we can show that the map
\[
\wh{\Gamma}\colon (C^{r+s}(V,F)\times C^r(K,V))\times K\to F,\;
\wh{\Gamma}(\gamma,\eta,x):=\Gamma(\gamma,\eta)(x)=\gamma(\eta(x))
\]
is $C^{s,r}$. Now
\begin{equation}\label{cheapheute}
\wh{\Gamma}(\gamma,\eta,x)=\ve_1(\gamma,\ve(\eta,x))
\end{equation}
where $\ve\colon C^r(K,E_2)\times K\to E_2$, $\ve(\eta,x):=\eta(x)$ is $C^{\infty,r}$
and the corresponding evaluation map $\ve_1\colon C^{r+s}(V,F)\times V\to F$ is $C^{\infty, r+s}$.
Applying the Chain Rule in the form \cite[Lemma~81]{Alz} to the right hand side
of (\ref{cheapheute}), we see that $\wh{\Gamma}$ is $C^{\infty,s,r}$
in $(\gamma,\eta,x)$ and hence $C^{s,r}$ in $((\gamma,\eta),x)$, by \cite[Lemma~77]{Alz}.\\[2.3mm]
Since $\Gamma(\gamma,\eta)$ is a continuous linear function of $\gamma\in C^{r+s}(V,F)$ for fixed
$\eta\in C^r(K,V)$,
the first partial differential always exists and is given by
\begin{equation}\label{firstd}
d_1\Gamma(\gamma,\eta;\gamma_1)=\Gamma(\gamma_1,\eta)=\gamma_1\circ\eta.
\end{equation}
If $s\geq 1$ and $\gamma\in C^{r+s}(V,F)$, then
\[
\Gamma(\gamma,\sbull)=C^r(K,\gamma)\colon C^r(K,V)\to C^r(K,F),\quad \eta\mto\gamma\circ\eta
\]
with $dC^r(K,\gamma)=C^r(K,d\gamma)$ (identifying the space $C^r(K,E_2\times E_2)$ with $C^r(K,E_2)\times
C^r(K,E_2)$), see, e.g., \cite{GaN}, cf.\ also \cite{FUN}.
Hence
\[
d\Gamma(\gamma,\eta,\gamma_1,\eta_1)=d_1\Gamma(\gamma,\eta;\gamma_1)+d_2\Gamma(\gamma,\eta;\eta_1)
=\gamma_1\circ\eta+d\gamma\circ(\eta,\eta_1),
\]
establishing (\ref{dGamma}). Since $\Diff^{C^r}_{\partial K}(K)$ is a submanifold of~$C^r(K,E)$,
it suffices to show that~$\mu$ is $C^s$ as a map to $C^r(K,E)$.
But this is the restriction of $\Gamma$ (for $E_2:=F:=E$ and $V:=K$)
to the submanifold $\Diff_{\partial K}^{C^{r+s}}(K)\times \Diff_{\partial K}^{C^r}(K)$
of $C^{r+s}(K,E)\times C^r(K,E)$ and hence $C^s$ (like $\Gamma$).
For each $\eta\in \Diff^{C^r}_{\partial K}(K)$,
the map
$\Gamma(\sbull,\eta)\colon \gamma\mto \gamma\circ \eta$
is continuous linear and hence smooth, entailing that also $\rho_\eta=\Gamma(\sbull,\eta)|_{\Diff^{C^r}_{\partial K}(K)}$
is smooth.
\end{proof}
\section{Smoothness of inversion and Theorem C}\label{sec5}
We prove Theorem~C. As a byproduct,
we obtain continuity and differentiability properties for the inversion
map $\Diff^{C^r}_{\partial K}(K)\to \Diff^{C^r}_{\partial K}(K)$, $\phi\mto\phi^{-1}$.\\[2.3mm]
{\bf Proof of Theorem C.} As before, $E:=\R^n$ and $K\sub E$.
We show by induction that $g$ is~$C^k$ for all $k\in\N_0$ with $k\leq r$.
Since $g$ is continuous by Lemma~\ref{autctsinv}, the case $k=0$ is settled.
Now assume that $k\geq 1$.
Since $f(z,\sbull)\colon K^0\to K^0$ is a $C^r$-diffeomorphism
for each $z\in U^0$, we deduce that
\[
\psi\colon U^0 \times K^0\to U^0  \times K^0,\quad \psi(z,x):=(z,f(z,x))
\]
is a bijection.
Since $d_2f(z,x;\sbull)=(f_z)'(x)\in\GL(E)$ for each $z\in U^0$
and $x\in K^0$, the Inverse Function Theorem with Parameters
(in the form
\cite[Theorem 2.3\,(c)]{IMP}) shows that
$\psi$ is a $C^r$-diffeomorphism (being bijective and a local $C^r$-diffeomorphism around each point).
Note that
\[
d\psi(z,x,z_1,x_1)=(z_1,d_1f(z,x;z_1)+d_2f(z,x;x_1))
\]
for all $z\in U^0$, $z_1\in F$, $x\in K^0$, and $x_1\in E$
(using the Rule on Partial Differentials).
Recalling the familiar identity
\[
(\psi^{-1})'(z,y)=(\psi'(\psi^{-1}(z,y)))^{-1},
\]
we deduce that
\begin{equation}\label{twocompo}
(\psi^{-1})'(z,y,z_1,y_1)=(z_1, ((f_z)'(x))^{-1} (y_1-d_1f(z,x;z_1)))
\end{equation}
for all $z\in U^0$, $z_1\in F$, $y\in K^0$, and $y_1\in E$,
with $x:=(f_z)^{-1}(y)=g(z,y)$.
Since $\psi^{-1}(z,y)=(z,g(z,y))$,
looking at the second component of (\ref{twocompo}) we see that $g|_{U^0\times K^0}$
is $C^r$ (like $\psi^{-1}$), with
\begin{equation}\label{soonhave}
d(g|_{U^0\times K^0})(z,y,z_1,y_1)=((f_z)'(g(z,y)))^{-1}(y_1-d_1f(z,g(z,y); z_1))
\end{equation}
for all $z\in U^0$, $z_1\in F$, $y\in K^0$, and $y_1\in E$.
Let $I\colon\GL(E)\to \GL(E)$, $A\mto A^{-1}$ be the smooth (and hence continuous)
inversion map.
The mapping
\[
h\colon U\times K\times E\to E,\quad h(z,y,v):=d_2f(z,g(z,y);v)
\]
is continuous and $h_{z,y}:=h(z,y,\sbull)=(f_z)'(g(z,y))\in\GL(E)$ for all $(z,y)\in U\times K$.
Hence $h^\vee\colon U\times K\to \GL(E)\sub C(E,E)$, $h^\vee(z,y):=h_{z,y}$ is continuous.
As a consequence, $I\circ h^\vee\colon U\times K\to\GL(E)$ is continuous, whence also the map
\[
\beta :=\wh{I\circ h^\vee}\colon U\times K\times E\to E,\quad (z,y,w)\mto h_{z,y}^{-1}(w)
\]
is continuous.
Note that the right hand side of (\ref{soonhave}) also makes sense
for $z\in U$, $z_1\in F$, $y\in K$ and $y_1\in E$,
and defines a function $\theta\colon U \times K\times F \times E\to E$,
\begin{eqnarray}
\theta(z,y,z_1,y_1)& := &
((f_z)'(g(z,y)))^{-1}(y_1-d_1f(z,g(z,y;z_1)))\notag\\
&=& \beta(z,y,y_1-d_1f(z,g(z,y);z_1))\label{hencegood}
\end{eqnarray}
which is continuous.
As this mapping extends $d(g|_{U^0\times K^0})$ by (\ref{soonhave}),
we see that $g$ is $C^1$ with $dg=\theta$.
If now $k\geq 2$ and $g$ is $C^{k-1}$ by induction,
then $h$ is $C^{k-1}$ and thus $\beta$ is $C^{k-1}$,
by Lemma~\ref{invlinpar}.
Hence $dg=\theta$ is $C^{k-1}$ (see (\ref{hencegood})) and thus $g$ is $C^k$,
which completes the inductive proof.\;\Punkt\vspace{2.3mm}

\noindent
For each $r\in\N\cup\{\infty\}$, the evaluation map $C^r(K,E)\times K\to E$ is $C^{\infty,r}$
and hence $C^r$ (see \cite[Proposition~3.20 and Lemma~3.15]{AaS}), whence its~restriction
\[
f\colon\Diff^{C^r}_{\partial K}(K)\times K\to K,\quad f(\phi,x):=\phi(x)
\]
is $C^r$ as well. Since $f(\phi,\sbull)=\phi\in\Diff^{C^r}_{\partial K}(K)$, Theorem~C shows that
the mapping in (\ref{diffparbd}) is $C^r$.
If $r\in\N\cup\{\infty\}$ and $s\in\N_0\cup\{\infty\}$, then the map
\[
h\colon \Diff^{C^{r+s}}_{\partial K}(K)\times K\to K,\quad (\phi,y)\mto\phi^{-1}(y)
\]
is $C^{r+s}$ by the preceding and hence $C^{s,r}$, entailing that
\[
h^\vee\colon \Diff^{C^{r+s}}_{\partial K}(K)\to C^r(K,E),\quad \phi\mto \phi^{-1}
\]
is $C^s$ as a map to $C^r(K,E)$ (by \cite[Theorem 3.25\,(a)]{AaS})
and hence also as a map to its submanifold $\Diff^{C^r}_{\partial K}(K)$.
\section{Regularity}\label{sec6}
We shall use a standard fact from the theory of ordinary differential equations
(a quantitative version of the Picard-Lindel\"{o}f Theorem):\footnote{The condition $L<1$ is actually superfluous;
one can use the Fixed Point Theorem of Banach-Weissinger~\cite{Wei}
instead of the standard Banach Fixed Point Theorem.}
\begin{la}\label{quantlind}
Let $E$ be a Banach space, $R>0$, $x_0\in E$ and
\[
f\colon [0,1]\times \wb{B}^E_R(x_0)\to E
\]
be a bounded continuous function which satisfies a global Lipschitz condition,
\[
L:=\sup_{t\in[0,1]}\Lip(f(t,\sbull))\; <\;\infty.
\]
If $\|f\|_\infty \leq R$ and $L<1$, then there is a $($unique$)$ $C^1$-function $\gamma\colon [0,1]\to\wb{B}^E_R(x_0)$
such that $\gamma'(t)=f(t,\gamma(t))$ for all $t\in[0,1]$ and $\gamma(0)=x_0$.
\end{la}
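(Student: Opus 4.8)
The plan is to recast the initial value problem as a fixed-point equation for the Picard integral operator and to solve it via the Banach Fixed Point Theorem. First I would record the standard equivalence: a continuous map $\gamma\colon[0,1]\to\wb{B}^E_R(x_0)$ satisfies
\[
\gamma(t)=x_0+\int_0^t f(s,\gamma(s))\,ds\quad\mbox{for all }t\in[0,1]
\]
if and only if $\gamma$ is $C^1$ with $\gamma'(t)=f(t,\gamma(t))$ and $\gamma(0)=x_0$. Indeed, if $\gamma$ is continuous then $s\mapsto f(s,\gamma(s))$ is continuous, so the right-hand side is a $C^1$-function of $t$ with derivative $f(t,\gamma(t))$ by the Fundamental Theorem of Calculus; conversely, any $C^1$-solution integrates to the displayed identity. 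Here I use that weak integrals of continuous maps from $[0,1]$ into a Banach space exist and satisfy $\big\|\int_0^t g\,ds\big\|\le\int_0^t\|g\|\,ds$. Thus it suffices to produce a unique \emph{continuous} solution of the integral equation.

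Second, I would set $X:=C([0,1],\wb{B}^E_R(x_0))$, the set of continuous curves into the closed ball, regarded as a subset of the Banach space $(C([0,1],E),\|.\|_\infty)$. Since $\wb{B}^E_R(x_0)$ is closed in~$E$, the set $X$ is closed in $C([0,1],E)$ and hence a complete metric space. I would then define the Picard operator
\[
\Phi\colon X\to C([0,1],E),\quad \Phi(\gamma)(t):=x_0+\int_0^t f(s,\gamma(s))\,ds,
\]
which is well-defined by continuity of $f$ and of the curves in~$X$.

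Third, the two hypotheses enter exactly once each. Using $\|f\|_\infty\le R$ and $t\le 1$,
\[
\|\Phi(\gamma)(t)-x_0\|=\Big\|\int_0^t f(s,\gamma(s))\,ds\Big\|\le\int_0^t\|f(s,\gamma(s))\|\,ds\le t\,\|f\|_\infty\le\|f\|_\infty\le R,
\]
so $\Phi(\gamma)\in X$ and $\Phi$ is a self-map of~$X$. Using the Lipschitz bound, for $\gamma_1,\gamma_2\in X$ one has
\[
\|\Phi(\gamma_1)(t)-\Phi(\gamma_2)(t)\|\le\int_0^t\Lip(f(s,\sbull))\,\|\gamma_1(s)-\gamma_2(s)\|\,ds\le L\,\|\gamma_1-\gamma_2\|_\infty,
\]
whence $\|\Phi(\gamma_1)-\Phi(\gamma_2)\|_\infty\le L\,\|\gamma_1-\gamma_2\|_\infty$ with $L<1$, so $\Phi$ is a contraction. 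The Banach Fixed Point Theorem then yields a unique $\gamma\in X$ with $\Phi(\gamma)=\gamma$, i.e.\ a unique continuous solution of the integral equation; by the first step, this is precisely the desired unique $C^1$-solution of the initial value problem.

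I expect no serious obstacle here: the whole argument is a textbook Picard iteration via a contraction. The only points meriting care are that we work with $E$-valued rather than scalar-valued data, so I must invoke the existence of the weak integrals together with the norm estimate used above, and the completeness of $X$ as a closed subset of $C([0,1],E)$ -- all of which are standard. As the footnote indicates, the hypothesis $L<1$ could be dropped by replacing the Banach Fixed Point Theorem with the Banach--Weissinger theorem; for the stated version, however, $L<1$ is exactly what makes $\Phi$ a contraction on all of $[0,1]$ in one step.
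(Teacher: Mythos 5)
Your proposal is correct and follows essentially the same route as the paper's proof: both recast the initial value problem as a fixed-point equation for the Picard integral operator on a closed subset of $(C([0,1],E),\|.\|_\infty)$, verify the self-map and contraction properties from $\|f\|_\infty\leq R$ and $L<1$, and conclude via the Banach Fixed Point Theorem together with the Fundamental Theorem of Calculus. The only cosmetic difference is that the paper builds the initial condition $\eta(0)=x_0$ into its domain $A$, whereas you let the operator enforce it automatically; both variants are equally valid.
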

\begin{proof}
$A:=\{\eta\in C([0,1],\wb{B}^E_R(x_0))\colon\eta(0)=x_0\}$
is a closed subset of the Banach space $(C([0,1], E),\|.\|_\infty)$ and
\[
\psi\colon A\to A,\quad \psi(\eta)(t):=x_0+\int_0^tf(s,\eta(s))\, ds
\]
is a self-map of $A$. If $L<1$, then $\psi$ is a contraction, and hence $\psi$
has a unique fixed point $\gamma$ (by Banach's Fixed Point Theorem), which (by the Fundamental Theorem of Calculus)
is the unique $C^1$-solution $\gamma$ of the initial value problem described in the
lemma.
\end{proof}
Let $E:=\R^n$ and $K\sub E$ be a compact convex set with non-empty interior.
\begin{la}\label{cmbine}
The following map is smooth:
\[
h\colon C([0,1],C^\infty(K,E))\times C([0,1],K)\to C([0,1],E),\;\;
h(\gamma,\eta)(t):=\gamma(t)(\eta(t)).
\]
\end{la}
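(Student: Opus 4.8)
The plan is to reduce everything to the already-available smoothness of the evaluation map $\mathrm{ev}\colon C^\infty(K,E)\times K\to E$, $(\zeta,x)\mapsto\zeta(x)$, combined with the exponential law and the Chain Rule for maps of mixed regularity. Throughout, write $U_1:=C([0,1],C^\infty(K,E))$ and $U_2:=C([0,1],K)$. Note first that $U_1$ is a locally convex space (continuous curves into the Fr\'echet space $C^\infty(K,E)$) and that $U_2$ is a convex subset with dense interior of the Banach space $C([0,1],E)$: its interior contains $C([0,1],K^0)$ (if $\eta([0,1])\sub K^0$ then, $\eta([0,1])$ being compact in the open set $K^0$, a sufficiently small sup-norm ball around $\eta$ stays in $U_2$), and for $\eta\in U_2$ and $x_0\in K^0$ the curves $(1-s)\eta+sx_0$ lie in $C([0,1],K^0)$ for $0<s\leq1$ and converge to $\eta$ as $s\to0$. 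Hence the $C^{r,s}$-calculus of \cite{AaS} and \cite{Alz} applies on the relevant product domains.

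First I would record the two ``evaluation of curves'' maps. Since the identity map of the locally convex space $C^0([0,1],C^\infty(K,E))$ is smooth and $[0,1]$ is (locally) compact, \cite[Theorem 3.28\,(a)]{AaS} shows that
\[
\mathrm{ev}_1\colon U_1\times[0,1]\to C^\infty(K,E),\quad (\gamma,t)\mto\gamma(t)
\]
is $C^{\infty,0}$. Likewise the evaluation $C([0,1],E)\times[0,1]\to E$ is $C^{\infty,0}$, and restricting it to the product of locally convex subsets with dense interior $U_2\times[0,1]$ gives that
\[
\mathrm{ev}_2\colon U_2\times[0,1]\to K,\quad (\eta,t)\mto\eta(t)
\]
is $C^{\infty,0}$. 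Reading each of these as a function of the three variables $(\gamma,\eta,t)$ that is independent of one slot (i.e.\ precomposing with the smooth coordinate projection), both become $C^{\infty,\infty,0}$, and hence so does
\[
\Phi\colon U_1\times U_2\times[0,1]\to C^\infty(K,E)\times K,\quad \Phi(\gamma,\eta,t):=(\gamma(t),\eta(t)).
\]

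Now consider $\wh{h}(\gamma,\eta,t):=h(\gamma,\eta)(t)=\gamma(t)(\eta(t))=\mathrm{ev}(\Phi(\gamma,\eta,t))$. The evaluation map $\mathrm{ev}\colon C^\infty(K,E)\times K\to E$ is $C^{\infty,\infty}$ and hence smooth, as used in Section~\ref{sec5} (by \cite[Proposition 3.20 and Lemma 3.15]{AaS}). The main step is therefore to feed the mixed-regularity map $\Phi$ into the smooth map $\mathrm{ev}$ while tracking the orders: by the Chain Rule for maps of type $C^{\alpha_1,\ldots,\alpha_n}$ (in the form \cite[Lemma 81]{Alz}, exactly as in the proof of Lemma~\ref{premultHamz}), the composite $\wh{h}=\mathrm{ev}\circ\Phi$ is $C^{\infty,\infty,0}$ in $(\gamma,\eta,t)$, and hence $C^{\infty,0}$ in $((\gamma,\eta),t)$ by the reordering lemma \cite[Lemma 77]{Alz}. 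Finally, the exponential law \cite[Theorem 3.25\,(a)]{AaS} promotes the $C^{\infty,0}$-map $\wh{h}\colon(U_1\times U_2)\times[0,1]\to E$ to the $C^\infty$-map $\wh{h}^\vee=h\colon U_1\times U_2\to C^0([0,1],E)=C([0,1],E)$, which is the assertion.

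I expect the only genuine obstacle to be the careful bookkeeping of the mixed orders --- verifying that $\mathrm{ev}_1,\mathrm{ev}_2$ promote to $C^{\infty,\infty,0}$-maps in all three variables and that the Chain Rule of \cite{Alz} applies to the composition with $\mathrm{ev}$ --- since all analytic content is already packaged in the smoothness of the evaluation map and in the exponential law. The domains $U_1\times U_2\times[0,1]$ are products of locally convex subsets with dense interior, so the $C^{r,s}$-calculus is available without further hypotheses.
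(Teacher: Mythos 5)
Your overall architecture is the same as the paper's: show that $\wh{h}(\gamma,\eta,t)=\gamma(t)(\eta(t))$ is $C^{\infty,0}$ in $((\gamma,\eta),t)$ by writing it as evaluation maps plugged into the evaluation map $\ve_1\colon C^\infty(K,E)\times K\to E$, then apply the exponential law \cite[Theorem 3.25\,(a)]{AaS} to recover $h=(\wh{h})^\vee$. However, there is a genuine gap at the chain-rule step. You form $\Phi(\gamma,\eta,t)=(\gamma(t),\eta(t))$ and invoke \cite[Lemma~81]{Alz} ``exactly as in the proof of Lemma~\ref{premultHamz}'' to conclude that $\ve_1\circ\Phi$ is $C^{\infty,\infty,0}$. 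But the chain rule for mappings of type $C^{\alpha_1,\ldots,\alpha_n}$, in the form used in this paper (and in its use in Lemma~\ref{premultHamz}), applies to compositions $f\circ(g_1\times\cdots\times g_n)$ in which the inner maps depend on \emph{disjoint} groups of variables: in Lemma~\ref{premultHamz} the inner maps are $\gamma\mto\gamma$ and $(\eta,x)\mto\ve(\eta,x)$, whose variable groups $\{\gamma\}$ and $\{\eta,x\}$ do not overlap. Your $\Phi$ is not of this product form: both components $\gamma(t)$ and $\eta(t)$ depend on the \emph{same} variable $t$, so the hypotheses of the cited lemma are not met, and the step is unjustified as written. (The conclusion you want --- that postcomposition of a mixed-regularity map by a genuinely smooth map preserves the mixed orders even when inner slots share variables --- is true, but it is not among the cited results; proving it is essentially the content you are skipping.)

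The paper's proof handles exactly this point by the doubling-of-variables trick: it considers the map sending $(\gamma,\eta,s,t)$ to $\ve_1(\ve_2(\gamma,s),\ve_3(\eta,t))$, in which the inner maps now depend on the disjoint groups $\{\gamma,s\}$ and $\{\eta,t\}$, so that \cite[Lemma~81]{Alz} applies and yields a $C^{\infty,\infty,0,0}$-map; after regrouping via \cite[Lemma~77]{Alz} this map is $C^{\infty,0}$ in $((\gamma,\eta),(s,t))$, and one then restricts to the diagonal by composing with the product map $(\gamma,\eta,t)\mto((\gamma,\eta),(t,t))$, using \cite[Lemma~3.17]{AaS}, to conclude that $\wh{h}$ is $C^{\infty,0}$. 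Inserting this doubling-plus-diagonal step into your argument (in place of the direct composition $\ve_1\circ\Phi$) repairs the proof; the remaining ingredients of your proposal --- the verification that $C([0,1],K)$ has dense interior in $C([0,1],E)$, the $C^{\infty,0}$-property of the curve-evaluation maps via \cite[Theorem 3.28\,(a)]{AaS}, and the final exponential-law step --- are correct and agree with the paper.
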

\begin{proof}
If we can show that $\wh{h}\colon C([0,1],C^\infty(K,E))\times C([0,1],K)\times[0,1]\to E$, 
$\wh{h}(\gamma,\eta,t):=h(\gamma,\eta)(t)$ is $C^{\infty,0}$ as a function of $((\gamma,\eta),t)$,
then $h=(\wh{h})^\vee$ will be $C^\infty$ by \cite[Theorem~3.25\,(a)]{AaS}.
We have
\[
\wh{h}(\gamma,\eta,t)=\gamma(t)(\eta(t))=\ve_1(\ve_2(\gamma,t),\ve_3(\eta,t))
\]
where the evaluation maps $\ve_3\colon C([0,1],K)\times [0,1]\to K$
and
\[
\ve_2\colon C([0,1],C^\infty(K,E))\times [0,1]\to C^\infty(K,E)
\]
are $C^{\infty,0}$
and the evaluation map $\ve_1\colon C^\infty(K,E)\times K\to E$ is~$C^{\infty,\infty}$
(see \cite[Proposition 3.20]{AaS}).
Now the map
\[
C([0,1],C^\infty(K,E))\times C([0,1],K)\times [0,1]\times[0,1]\to E
\]
which sends $(\gamma,\eta,s,t)$ to $\ve_1(\ve_2(\gamma,s),\ve_3(\eta,t))$
is $C^{\infty,\infty,0,0}$ by \cite[Lemma~81]{Alz}
and thus $C^{\infty,0}$ in $((\gamma,\eta),(s,t))$ (using \cite[Lemma 77]{Alz} twice).
Composing with $t\mto (t,t)$, we deduce with \cite[Lemma~3.17]{AaS} that $\wh{h}$ is $C^{\infty,0}$.
\end{proof}
As before,
let $E:=\R^n$ and $K\sub E$ be a compact convex set with non-empty interior.
Let $\theta\in \; ]0,1/3]$. Since
\begin{equation}\label{thuslip}
q\colon C^\infty_{\partial K}(K,E)\to[0,\infty[,\quad \eta\mto \sup_{x\in K}\|\eta'(x)\|_{op}=\Lip(\eta)
\end{equation}
is a continuous seminorm, we deduce that
\[
\|.\|_{\infty,q}\colon C([0,1],C^\infty_{\partial K}(K,E))\to[0,\infty[,\quad
\gamma \mto\sup_{t\in [0,1]}q(\gamma(t))
\]
is a continuous seminorm on $C([0,1],C^\infty_{\partial K}(K,E))$. Hence
\[
Q:=\{\gamma \in C([0,1],C^\infty_{\partial K}(K,E))\colon \|\gamma\|_{\infty,q}<\theta\}
\]
is an open $0$-neighbourhood in $C([0,1],C^\infty_{\partial K}(K,E))$.
If $\gamma\in Q$, then
\[
\wh{\gamma}\colon [0,1]\times K\to E, \quad (t,x)\mto \gamma(t)(x)
\]
is a $C^{0,\infty}$-map (see \cite[Theorem 3.28\,(a)]{AaS}) which satisfies a global Lipschitz condition because
\begin{equation}\label{leqthe}
\sup_{t\in[0,1]}\Lip(\wh{\gamma}(t,\sbull))=\sup_{t\in[0,1]}\Lip(\gamma(t))\leq\theta<1.
\end{equation}
For $x\in K$, let
\[
d_{\partial K}(x):=\min_{y\in\partial K}\|y-x\|
\]
be the distance between $x$ and $\partial K$;
the minimum is attained as $\partial K$ is a non-empty compact set.
If $\eta\in C^\infty_{\partial K}(K,E)$ with $q(\eta)<\theta$
(e.g.\ $\eta=\gamma(t)$ for some $\gamma \in Q$ and $t\in[0,1]$),
then
\begin{equation}\label{estsizedist}
\|\eta(x)\|\leq \theta d_{\partial K}(x)\quad\mbox{for all $x\in K$.}
\end{equation}
To see this, given $x\in K$ pick $y\in \partial K$ such that $\|y-x\|=d_{\partial K}(x)$.
Noting that $\eta(y)=0$ since $\eta\in C^\infty_{\partial K}(K,E)$ and $y\in\partial K$, we deduce that
\[
\|\eta(x)\|=\|\eta(x)-\eta(y)\|\leq \Lip(\eta)\|y-x\|\leq\theta\|y-x\|=\theta d_{\partial K}(x).
\]
\begin{la}\label{smtoC0}
For each $\gamma\in Q$ and $x_0\in K$,
the initial value problem
\begin{equation}\label{inivalpr}
y'(t)=\wh{\gamma}(t,y(t)), \quad y(0)=x_0
\end{equation}
has a unique solution $y_{\gamma,x_0}\colon [0,1]\to K$.
The following map is smooth:
\[
\phi\colon Q\times K\to C([0,1],K),\quad (\gamma,x_0)\mto y_{\gamma,x_0}.
\]
\end{la}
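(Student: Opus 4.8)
The plan is to settle existence and uniqueness first, and then to recognise the assignment $(\gamma,x_0)\mto y_{\gamma,x_0}$ as a parametrised fixed point to which Lemma~\ref{pardepfix} applies. To construct the solution I would extend the time-dependent field to all of $\R^n$ by means of the nearest-point projection $\pi_K\colon\R^n\to K$, which is $1$-Lipschitz because $K$ is convex. Setting $\wt{\wh{\gamma}}(t,x):=\gamma(t)(\pi_K(x))$, one obtains a bounded continuous map on $[0,1]\times\R^n$ that is globally Lipschitz in $x$ with constant $\le\theta<1$ (by \eqref{leqthe} and $\Lip(\pi_K)\le1$) and that vanishes on $\R^n\setminus K^0$; indeed, for $x\notin K^0$ one has $\pi_K(x)\in\partial K$, where $\gamma(t)$ vanishes. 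Lemma~\ref{quantlind}, applied on the ball $\wb{B}^E_R(x_0)$ with $R:=\|\wh{\gamma}\|_\infty+1\ge\|\wt{\wh{\gamma}}\|_\infty$, then yields a unique $C^1$-solution $y\colon[0,1]\to\R^n$ of $y'=\wt{\wh{\gamma}}(t,y)$, $y(0)=x_0$. The decisive point is that $y$ stays in $K$: if $x_0\in\partial K$ the field vanishes along the constant curve $x_0$, so $y\equiv x_0$ by uniqueness; and if $x_0\in K^0$, then should $y$ reach a first boundary point $y(t_*)\in\partial K$, the field would vanish there and uniqueness would force $y$ to equal the constant curve $y(t_*)$, contradicting $y(0)\in K^0$. (Alternatively, as $d_{\partial K}$ is $1$-Lipschitz, \eqref{estsizedist} gives $|\tfrac{d}{dt}d_{\partial K}(y(t))|\le\theta\,d_{\partial K}(y(t))$ while $y$ remains in $K$, and Gr\"onwall keeps $d_{\partial K}(y(t))\ge e^{-\theta}d_{\partial K}(x_0)>0$.) Since $\wt{\wh{\gamma}}$ agrees with $\wh{\gamma}$ on $K$, this $y=:y_{\gamma,x_0}$ solves \eqref{inivalpr}, and every $K$-valued solution solves the extended problem and hence coincides with it.

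For smoothness I would put $P:=Q\times K$ and $V:=C([0,1],K)\sub F:=C([0,1],E)$, the latter a Banach space under $\|.\|_\infty$, and consider
\[
f\colon P\times V\to F,\qquad f\big((\gamma,x_0),\eta\big)(t):=x_0+\int_0^t\gamma(s)(\eta(s))\,ds.
\]
Here $P$ is a locally convex subset with dense interior $Q\times K^0$, and $V$ is convex with dense interior: for $\eta\in V$ and an interior point $x_*\in K^0$ with $\wb{B}^E_\delta(x_*)\sub K$, the curves $(1-\sigma)\eta+\sigma x_*$ lie in $C([0,1],K)$, converge to $\eta$ as $\sigma\to0$, and by convexity keep distance $\ge\sigma\delta$ from $\partial K$, hence lie in $V^0$. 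The map $f$ is smooth: by Lemma~\ref{cmbine} the assignment $(\gamma,\eta)\mto[t\mto\gamma(t)(\eta(t))]$ is smooth (precomposing, in the first argument, with the continuous linear inclusion $C([0,1],C^\infty_{\partial K}(K,E))\hookrightarrow C([0,1],C^\infty(K,E))$), and $f$ arises from it by postcomposition with the continuous linear integration operator $\zeta\mto[t\mto\int_0^t\zeta(s)\,ds]$ and addition of the continuous linear constant-curve map $x_0\mto x_0$.

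Next, $f$ is a uniform family of contractions: for $\eta_1,\eta_2\in V$ one has $\|f((\gamma,x_0),\eta_1)-f((\gamma,x_0),\eta_2)\|_\infty\le\int_0^1\Lip(\gamma(s))\,\|\eta_1-\eta_2\|_\infty\,ds\le\theta\,\|\eta_1-\eta_2\|_\infty$, using only $\Lip(\gamma(s))=q(\gamma(s))<\theta$ for $\gamma\in Q$. By the Fundamental Theorem of Calculus a curve $\eta\in V$ is a fixed point of $f_{(\gamma,x_0)}$ if and only if it is a $K$-valued solution of \eqref{inivalpr}, so the unique fixed point of $f_{(\gamma,x_0)}$ in $V$ is exactly $y_{\gamma,x_0}$, and hence the fixed-point map produced by Lemma~\ref{pardepfix} is precisely the map $\phi$ under consideration. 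Since $f$ is continuous and the contraction is uniform, the standard estimate $(1-\theta)\|\phi(p)-\phi(p')\|_\infty\le\|f(p,\phi(p'))-f(p',\phi(p'))\|_\infty$ shows that $\phi$ is continuous, whereupon Lemma~\ref{pardepfix}(b) (with $k=\infty\ge2$) yields that $\phi$ is smooth. I expect the only genuine difficulty to be the invariance of $K$ under the flow in the first step, which is exactly where the hypothesis that the field vanishes on $\partial K$ enters; once $y_{\gamma,x_0}$ is known to be $K$-valued, the rest is a clean application of Lemmas~\ref{cmbine} and~\ref{pardepfix}, the key point being that Lemma~\ref{pardepfix} requires only the existence of the fixed point in $V$, not that $f$ map $V$ into itself (which in fact fails here).
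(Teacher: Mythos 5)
Your proof is correct, and although it uses the same fixed-point skeleton as the paper (the integral operator $f(\gamma,x_0,\eta)=x_0+J(h(\gamma,\eta))$, its smoothness via Lemma~\ref{cmbine}, and Lemma~\ref{pardepfix}\,(b) at the end), it differs at the two places where the real work happens. For existence and invariance, the paper never leaves $K$: for $x_0\in K^0$ it applies Lemma~\ref{quantlind} on the ball $\wb{B}^E_R(x_0)$ with $R=\frac{1}{2}d_{\partial K}(x_0)$, using (\ref{estsizedist}) together with the hypothesis $\theta\le 1/3$ to bound $\|\wh{\gamma}\|$ by $R$ on that ball; this yields the quantitative containment $y_{\gamma,x_0}([0,1])\sub\wb{B}^E_R(x_0)$ of (\ref{wheresol}). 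You instead extend the field to $\R^n$ via the $1$-Lipschitz projection $\pi_K$ and deduce invariance of $K$ from (two-sided) uniqueness, exploiting that the field vanishes on $\partial K$; this is sound (backward uniqueness is standard for globally Lipschitz fields) and notably needs only $\theta<1$ rather than $\theta\le 1/3$. For continuity of $\phi$, the paper argues in two steps: Lemma~\ref{pardepfix}\,(a) on the open set $(Q\times K^0)\times C([0,1],K^0)$ gives continuity on $Q\times K^0$, and continuity at points with $x_0\in\partial K$ is proved by the hand-made estimate (\ref{lastbit}), which reuses the containment (\ref{wheresol}). Your single estimate $(1-\theta)\|\phi(p)-\phi(p')\|_\infty\le\|f(p,\phi(p'))-f(p',\phi(p'))\|_\infty$ (the classical uniform contraction principle) gives continuity on all of $Q\times K$ at once, with no case distinction; this is genuinely simpler and is precisely what lets you discard the paper's quantitative bounds. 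What the paper's route buys is the explicit Lipschitz-type estimate (\ref{lastbit}) near boundary points (not used later); what yours buys is brevity and a weaker hypothesis on $\theta$. Two further points in your favour: you verify explicitly that $C([0,1],K)$ has dense interior in $C([0,1],E)$, a hypothesis of Lemma~\ref{pardepfix} that the paper leaves implicit, and you correctly note that this lemma does not require $f_p$ to map $V$ into itself, which indeed fails here.
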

\begin{proof}
Since $\wh{\gamma}$ satisfies a global Lipschitz condition, solutions to the initial value problem (\ref{inivalpr})
are unique whenever they exist. If $x_0\in\partial K$, then the constant function given by $y_{\gamma,x_0}(t):=x_0$
for all $t\in[0,1]$ solves (\ref{inivalpr}).
If $x_0\in K^0$, then
\[
R:=\frac{1}{2}d_{\partial K}(x_0)>0.
\]
Then $\wb{B}^E_R(x_0)\sub K^0$. For each $x\in \wb{B}^E_R(x_0)$, we have $d_{\partial K}(x)\leq d_{\partial K}(x_0)+R
=\frac{3}{2}d_{\partial K}(x_0)$ and hence
\[
\|\wh{\gamma}(t,x)\|=\|\gamma(t)(x)\|
\leq \theta d_{\partial K}(x)\leq \frac{3}{2}\theta d_{\partial K}(x_0)\leq \frac{1}{2}d_{\partial K}(x_0)=R,
\]
using (\ref{estsizedist}) and the hypothesis that $\theta\leq\frac{1}{3}$.
Therefore (\ref{inivalpr}) has a solution
\begin{equation}\label{wheresol}
y_{\gamma,x_0}\colon [0,1]\to \wb{B}^E_R(x_0)\sub K^0\;\;  \mbox{with $R=\frac{1}{2}d_{\partial K}(x_0)$,}
\end{equation}
by Lemma~\ref{quantlind}.
Now consider the map
$f\colon (Q\times K)\times C([0,1],K)\to C([0,1],E)$,
\[
f(\gamma,x_0,\eta)(t):=x_0+\int_0^t\wh{\gamma}(s,\eta(s))\, ds
\]
for $t\in[0,1]$. Using the continuous linear operator
\[
J\colon C([0,1],E)\to C([0,1],E),\;\; J(\zeta)(t):=\int_0^t\zeta(s)\,ds
\]
and the function
\[
h\colon Q\times C([0,1],K)\to C([0,1],K),\;\; h(\gamma,\eta)(t):=\wh{\gamma}(t,\eta(t))=\gamma(t)(\eta(t))
\]
which is smooth by Lemma~\ref{cmbine}, we have
\[
f(\gamma,x_0,\eta)=x_0+J(h(\gamma,\eta)),
\]
where also $K\to C([0,1],E)$, $x_0\mto x_0$ (the constant function $t\mto x_0$)
is smooth as it is the restriction of the continuous linear map $E\to C([0,1],E)$, $x_0\mto x_0$.
Hence $f$ is smooth. Moreover, $f$ defines a uniform family of contractions.
In fact, writing $f_{\gamma,x_0}:=f(\gamma,x_0,\sbull)\colon C([0,1],K)\to C([0,1],E)$
for $\gamma\in Q$ and $x_0\in K$, we have
\begin{eqnarray*}
\|f_{\gamma,x_0}(\eta_1)(t)-f_{\gamma,x_0}(\eta)(t)\|&\leq&
\int_0^t\|\gamma(s)(\eta_1(s))-\gamma(s)(\eta(s))\|\, ds\\
&\leq& \int_0^1\Lip(\gamma(s))\|\eta_1(s)-\eta(s)\|\,ds
\leq \theta\|\eta_1-\eta\|_\infty
\end{eqnarray*}
for all $\eta,\eta_1\in C([0,1],K)$ and $t\in[0,1]$ and hence
\[
\|f_{\gamma,x_0}(\eta_1)-f_{\gamma,x_0}(\eta)\|_\infty\leq\theta\|\eta_1-\eta\|_\infty
\]
with $\theta\leq\frac{1}{3}<1$ independent of $(\gamma,x_0)$.
By construction, $\phi(\gamma,x_0)=y_{\gamma,x_0}$ is the (unique) fixed point of
$f_{\gamma,x_0}$, for all $(\gamma,x_0)\in Q\times K$.
In particular, $y_{\gamma,x_0}\in C([0,1],K^0)$ (see (\ref{wheresol})) is the fixed point of $f_{\gamma,x_0}|_{C([0,1],K^0)}$
for $\gamma\in Q$ and $x_0\in K^0$.
Hence, applying Lemma~\ref{pardepfix}\,(a) to the restriction
\[
(Q\times K^0)\times C([0,1],K^0)\to C([0,1],E),\;\; (\gamma,x_0,\eta)\mto f(\gamma,x_0,\eta)
\]
of~$f$, which is a $C^\infty$-map on an open domain, we find that
$\phi|_{Q\times K^0}$
is smooth, and hence continuous. If $\gamma\in Q$ and $x_0\in \partial K$, we shall presently show that
\begin{equation}\label{lastbit}
\|\phi(\gamma_1,x_1)-\phi(\gamma,x_0)\|_\infty\leq\frac{3}{2}\|x_1-x_0\|
\end{equation}
for all $x_1\in K$ and $\gamma_1\in Q$.
Thus  $\phi$ will be continuous at $(\gamma,x_0)$
also in this case. Hence $\phi$ will be continuous
and so $\phi$ will be smooth, by Lemma~\ref{pardepfix}\,(b).
To establish (\ref{lastbit}), assume $x_1\in\partial K$ first.
Then $\phi(\gamma_1,x_1)(t)=x_1$ for all $t$ and since also $\phi(\gamma,x_0)(t)=x_0$ for all~$t$,
we obtain
\[
\|\phi(\gamma_1,x_1)-\phi(\gamma,x_0)\|_\infty=\|x_1-x_0\|\leq\frac{3}{2}\|x_1-x_0\|.
\]
It remains to consider the case $x_1\in K^0$. Then $\phi(\gamma_1,x_1)(t)\in \wb{B}^E_R(x_1)$
for all $t\in[0,1]$ with $R:=\frac{1}{2}d_{\partial K}(x_1)\leq\frac{1}{2}\|x_1-x_0\|$
(see (\ref{wheresol})).
Hence
\begin{eqnarray*}
\|\phi(\gamma_1,x_1)(t)-\phi(\gamma,x_0)(t)\|&=& \|\phi(\gamma_1,x_1)(t)-x_0\|\\
&\leq& \|\phi(\gamma_1,x_1)(t)-x_1\|+\|x_1-x_0\|\\
&\leq & R+\|x_1-x_0\|\leq\frac{3}{2}\|x_1-x_0\|
\end{eqnarray*}
also in this case.
\end{proof}
\begin{la}\label{smtoC1}
In the situation of Lemma~\emph{\ref{smtoC0}}, also $\phi\colon Q\times K\to C^1([0,1],E)$, $(\gamma,x_0)\mto y_{\gamma,x_0}$
is smooth.
\end{la}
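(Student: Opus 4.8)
The goal is to upgrade the smoothness of $\phi$ from a map into $C([0,1],K)$ (established in Lemma~\ref{smtoC0}) to a map into $C^1([0,1],E)$. The plan is to exploit the structure of the fixed-point equation: by construction $y_{\gamma,x_0} = \phi(\gamma,x_0)$ satisfies the integral equation
\[
y_{\gamma,x_0}(t) = x_0 + \int_0^t \wh{\gamma}(s,y_{\gamma,x_0}(s))\, ds,
\]
so that $y_{\gamma,x_0}$ is $C^1$ with derivative $y_{\gamma,x_0}'(t) = \wh{\gamma}(t,y_{\gamma,x_0}(t)) = h(\gamma,\phi(\gamma,x_0))(t)$, where $h$ is the smooth map from Lemma~\ref{cmbine}.

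First I would recall that the topology on $C^1([0,1],E)$ is the initial topology with respect to the two linear maps $\zeta\mapsto\zeta$ into $C([0,1],E)$ and the derivative $\zeta\mapsto\zeta'$ into $C([0,1],E)$ (see \ref{defsmooCk}, \ref{Dcts}). Hence a map into $C^1([0,1],E)$ is smooth if and only if both its composition with the inclusion $C^1([0,1],E)\hookrightarrow C([0,1],E)$ and its composition with $D\colon C^1([0,1],E)\to C([0,1],E)$, $\zeta\mapsto\zeta'$, are smooth. The first of these compositions is just $\phi\colon Q\times K\to C([0,1],K)\sub C([0,1],E)$, which is smooth by Lemma~\ref{smtoC0} (followed by the smooth inclusion $C([0,1],K)\hookrightarrow C([0,1],E)$).

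It remains to show that the derivative map $(\gamma,x_0)\mapsto y_{\gamma,x_0}'$ is smooth as a map $Q\times K\to C([0,1],E)$. By the displayed identity above this map equals
\[
(\gamma,x_0)\mapsto h(\gamma,\phi(\gamma,x_0)),
\]
i.e.\ the composite of the smooth map $(\gamma,x_0)\mapsto(\gamma,\phi(\gamma,x_0))$ (smooth since $\phi$ is smooth into $C([0,1],K)$ by Lemma~\ref{smtoC0}) with the smooth map $h\colon Q\times C([0,1],K)\to C([0,1],K)\sub C([0,1],E)$ from Lemma~\ref{cmbine}. Being a composite of smooth maps, $(\gamma,x_0)\mapsto y_{\gamma,x_0}'$ is smooth.

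Since both $(\gamma,x_0)\mapsto y_{\gamma,x_0}$ (into $C([0,1],E)$) and $(\gamma,x_0)\mapsto y_{\gamma,x_0}'$ (into $C([0,1],E)$) are smooth, the universal property of the initial topology on $C^1([0,1],E)$ yields that $\phi\colon Q\times K\to C^1([0,1],E)$ is smooth. The only genuine subtlety is the justification that smoothness into $C^1$ reduces to smoothness of the value and of the derivative separately; once this is in place, everything else is a formal consequence of the two prior lemmas. I do not expect any substantial obstacle here, as the hard analytic work (the fixed-point and parameter-dependence arguments) was already carried out in Lemma~\ref{smtoC0}.
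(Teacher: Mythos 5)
Your proof is correct and follows essentially the same route as the paper: split $\phi$ into its value part, smooth into $C([0,1],E)$ by Lemma~\ref{smtoC0}, and its derivative part $(\gamma,x_0)\mapsto h(\gamma,\phi(\gamma,x_0))$, smooth by Lemmas~\ref{cmbine} and~\ref{smtoC0}. One refinement: the reduction to these two components is not a purely formal consequence of the initial topology --- since smoothness (unlike continuity) involves limits of difference quotients, one needs that $\Lambda\colon C^1([0,1],E)\to C([0,1],E)\times C([0,1],E)$, $\eta\mapsto(\eta,\eta')$, is a linear topological embedding with \emph{closed} image (this is \cite[Lemma~2.7]{AaS}, which the paper invokes), so that the limits computed in the product space actually lie in the image of $C^1([0,1],E)$.
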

\begin{proof}
The map $\Lambda\colon C^1([0,1],E)\to C([0,1],E)\times C([0,1],E)$, $\eta\mto(\eta,\eta')$ is
a linear topological embedding with closed image \cite[Lemma~2.7]{AaS}.
It therefore suffices to show that both components of $\Lambda\circ\phi$ are smooth
(see \cite{GaN}; cf.\ \cite[Lemma 10.1 and 10.2]{BGN}).
The first of these is $\phi$ as a map to $C([0,1],E)$ and hence smooth, by Lemma~\ref{smtoC0}.
The second component is the map
\[
Q\times K\to C([0,1],E),\quad (\gamma,x_0)\mto (y_{\gamma,x_0})',\quad\mbox{where}
\]
\[
(\forall t\in[0,1])\;\;\;
(y_{\gamma,x_0})'(t)=\wh{\gamma}(t,y_{\gamma,x_0}(t))=\wh{\gamma}(t,\phi(\gamma,x_0)(t))
\]
and thus $(y_{\gamma,x_0})'=h(\gamma,\phi(\gamma,x_0))$
with $h$ as  Lemma~\ref{cmbine}, which is a smooth $C([0,1],E)$-valued function of $(\gamma,x_0)\in Q\times K$
by smoothness of $h$ and smoothness of $\phi$ as a map to $C([0,1],K)$.
\end{proof}
Recall that $G:=\Diff_{\partial K}(K)$ is an open subset of the closed affine subspace
$\id_K+C^\infty_{\partial K}(K,E)$ of $C^\infty(K,E)$
and hence a smooth submanifold of $C^\infty(K,E)$ modelled
on $C^\infty_{\partial K}(K,E)$. Identifying the tangent bundle of the locally convex space
$C^\infty(K,E)$ with
$C^\infty(K,E)\times C^\infty(K,E)$ in the usual way, we also obtain an identification
\[
T\Diff_{\partial K}(K)=\Diff_{\partial K}(K)\times C^\infty_{\partial K}(K,E).
\]
This enables $L(G)=T_{\id_K}(\Diff_{\partial K}(K))=\{\id_K\}\times C^\infty_{\partial K}(K,E)$ to be identified with $C^\infty_{\partial K}(K,E)$ (forgetting the first component).
\begin{la}\label{spotevol}
If $\gamma\colon [0,1]\to L(G)=C^\infty_{\partial K}(K,E)$ is a continuous curve
and $\eta\colon [0,1]\to G$ a $C^1$-curve\footnote{Since $G$ is a submanifold, this is simply a $C^1$-curve to $C^\infty(K,E)$ with image in~$G$.} with $\eta(0)=\id_K$, then $\eta=\Evol^r(\gamma)$ if and only if
$\eta$ is the flow of the time-dependent vector field~$\gamma$
for initial time $t_0=0$, i.e.,
\[
\frac{\partial}{\partial t}(\eta(t)(x))=\gamma(t)(\eta(t)(x))\quad\mbox{for all $t\in[0,1]$}
\]
and $\eta(0)(x)=x$.
\end{la}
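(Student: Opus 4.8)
The plan is to unwind the abstract evolution equation $\eta'(t)=\gamma(t).\eta(t)$ into the pointwise flow equation by computing the right action of $G$ on $TG$ explicitly in the global trivialization $TG=G\times C^\infty_{\partial K}(K,E)$. Recall that the group multiplication is composition, and that it is the restriction of the map $\Gamma$ of Lemma~\ref{premultHamz} (taken with $E_2=F:=E$ and $V:=K$). First I would use the differentiation formula~(\ref{dGamma}) to record, for $(\phi,X),(\psi,Y)\in TG$,
\[
T\mu\bigl((\phi,X),(\psi,Y)\bigr)=\bigl(\phi\circ\psi,\,X\circ\psi+d\phi\circ(\psi,Y)\bigr).
\]
Since the right action is $v.g=T\mu(v,0_g)$, where $0_g=(g,0)$ is the zero tangent vector, and since $d\phi\circ(\psi,0)=0$ by linearity of $d\phi$ in its second argument, this specialises to
\[
(\phi,X).\psi=T\mu\bigl((\phi,X),(\psi,0)\bigr)=\bigl(\phi\circ\psi,\,X\circ\psi\bigr).
\]

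Next I would apply this with $\phi=\id_K$, $X=\gamma(t)\in L(G)=C^\infty_{\partial K}(K,E)$ and $\psi=\eta(t)$, which gives
\[
\gamma(t).\eta(t)=\bigl(\eta(t),\,\gamma(t)\circ\eta(t)\bigr)\in TG.
\]
Because $G$ is open in the affine subspace $\id_K+C^\infty_{\partial K}(K,E)$, the velocity of the $C^1$-curve $\eta$ is $\eta'(t)=\bigl(\eta(t),\tfrac{d}{dt}\eta(t)\bigr)$ in this trivialization, where $\tfrac{d}{dt}\eta(t)\in C^\infty_{\partial K}(K,E)$ is the ordinary derivative in the model space. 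The base points agree, so comparing fibre components shows that $\eta'(t)=\gamma(t).\eta(t)$ together with $\eta(0)=\id_K$ is equivalent to
\[
\frac{d}{dt}\eta(t)=\gamma(t)\circ\eta(t)\quad\text{in }C^\infty_{\partial K}(K,E),\qquad \eta(0)=\id_K.
\]

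Finally I would pass to the pointwise equation. For each $x\in K$ the point evaluation $\ve_x\colon C^\infty(K,E)\to E$, $\xi\mapsto\xi(x)$, is continuous linear and hence commutes with differentiation of curves; applying it to both sides yields
\[
\frac{\partial}{\partial t}\bigl(\eta(t)(x)\bigr)=\ve_x\Bigl(\frac{d}{dt}\eta(t)\Bigr)=\ve_x\bigl(\gamma(t)\circ\eta(t)\bigr)=\gamma(t)\bigl(\eta(t)(x)\bigr),
\]
together with $\eta(0)(x)=x$. Conversely, an element of $C^\infty(K,E)$ is determined by its values under all $\ve_x$, so the pointwise identities for every $x\in K$ recover the displayed equation in $C^\infty_{\partial K}(K,E)$, and hence the abstract evolution equation. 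This establishes the asserted equivalence. I expect the only delicate step to be the explicit computation of the right action and the correct identification of $\eta'(t)$ within the trivialization of $TG$; once these are settled, the transition to the flow equation is the routine interchange of $\tfrac{d}{dt}$ with the continuous linear point evaluations.
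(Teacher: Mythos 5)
Your proposal is correct and follows essentially the same route as the paper: both use Lemma~\ref{premultHamz} to identify the right action in the trivialization $TG=G\times C^\infty_{\partial K}(K,E)$ as $(\id_K,\gamma(t)).\eta(t)=(\eta(t),\gamma(t)\circ\eta(t))$, reduce the evolution equation to $\frac{d}{dt}\eta(t)=\gamma(t)\circ\eta(t)$ in the model space, and then pass to the pointwise flow equation via the point evaluations $\ve_x$, which are continuous linear and separate points. The only cosmetic difference is that you obtain the action by specializing the formula~(\ref{dGamma}) for $T\mu$ at a zero tangent vector, whereas the paper notes directly that right translation $\zeta\mapsto\zeta\circ\psi$ is continuous linear on $C^\infty(K,E)$, so its tangent map is itself in each fibre.
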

\begin{proof}
For each $\psi\in \Diff_{\partial K}(K)$, the right translation
\[
r_\psi\colon C^\infty(K,E)\to C^\infty(K,E),\quad \zeta\mto\zeta\circ\psi
\]
is a continuous linear map (by smoothness of $\Gamma$ in Lemma~\ref{premultHamz}), entailing that
\[
T(r_\psi)(\zeta,\theta)=(r_\psi(\zeta),r_\psi(\theta))
\]
for all $\zeta,\theta\in C^\infty(K,E)$. As a consequence, the tangent map of the restriction
\[
\rho_\psi\colon \Diff_{\partial K}(K)\to\Diff_{\partial K}(K),\quad \xi \mto \xi \circ \psi
\]
to the submanifold $\Diff_{\partial K}(K)$ is given by
\[
T\rho_\psi(\xi,\theta)=(\rho_\psi(\xi),r_\psi(\theta))=(\xi\circ \psi,\theta\circ \psi)
\]
for all $\xi\in \Diff_{\partial K}(K)$ and $\theta\in C^\infty_{\partial K}(K,E)$.
Hence, if $\gamma$ and $\eta$ are as described in the lemma,
then
$\eta=\Evol^r(\gamma)$ if and only if
\begin{equation}\label{tgether}
\eta'(t)=\gamma(t)\circ \eta(t)
\end{equation}
for all $t\in [0,1]$, where $\eta'(t)\in C^\infty_{\partial K}(K,E)\sub C^\infty(K,E)$
is the derivative of $\eta$ as a map to the locally convex space $C^\infty(K,E)$.
Applying the continuous linear point evaluations $\ve_x\colon C^\infty(K,E)\to E$, $\zeta\mto\zeta(x)$
for $x\in K$, which separate points on $C^\infty(K,E)$,
we see that (\ref{tgether}) is equivalent to
\[
\frac{\partial}{\partial t} (\eta(t)(x))=\gamma(t)(\eta(t)(x))
\]
for all $x\in K$, i.e., $t\mto \eta(t)(x)$ is the solution $y_{\gamma,x}$ of the initial value
problem $y'(t)=\gamma(t)(y(t))$, $y(0)=x$.
\end{proof}
\begin{prop}
The Lie group
$\Diff_{\partial K}(K)$ is $C^0$-regular,
for each compact convex subset $K\sub \R^n$ with non-empty interior.
\end{prop}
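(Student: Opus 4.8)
The plan is to prove $C^0$-regularity of $G:=\Diff_{\partial K}(K)$ in two stages. First I would establish the evolution and its smooth dependence on the parameter \emph{locally}, i.e.\ on the open $0$-neighbourhood $Q\sub C([0,1],\cg)$ defined just before Lemma~\ref{smtoC0}; here I identify $\cg=L(G)$ with $C^\infty_{\partial K}(K,E)$ and use Lemma~\ref{spotevol}, which reduces $\Evol^r(\gamma)$ to the flow $\eta_\gamma(t)(x):=y_{\gamma,x}(t)$ of the time-dependent vector field~$\gamma$. Second, I would pass from $Q$ to all of $C([0,1],\cg)$ by a time-subdivision argument that exploits smoothness of the group multiplication (Lemma~\ref{premultHamz}).

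For the local stage, fix $\gamma\in Q$. By Lemma~\ref{smtoC0} each solution stays in~$K$, is constant for $x\in\partial K$, and lies in~$K^0$ for $x\in K^0$ (see~(\ref{wheresol})); moreover $x\mapsto y_{\gamma,x}(t)=\ve_t(\phi(\gamma,x))$ is smooth, obtained by currying the locally compact factor~$K$ out of the smooth map~$\phi$, so $\eta_\gamma(t)\in C^\infty(K,E)$ fixes~$\partial K$. This map is injective by backward uniqueness of solutions, its $x$-derivative solves the variational equation $\partial_tA(t)=(\gamma(t))'(\eta_\gamma(t)(x))\,A(t)$, $A(0)=\id_E$ and hence lies in $\GL(E)$ throughout~$K$, and $\eta_\gamma(t)(x_0)\in K^0$ for a chosen $x_0\in K^0$; by the identity $\Omega_\infty=Q_\infty$ from the proof of Lemma~\ref{issubmfd} this gives $\eta_\gamma(t)\in\Diff_{\partial K}(K)$. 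Next, Lemma~\ref{smtoC1} makes $\phi\colon Q\times K\to C^1([0,1],E)$ smooth; currying out~$K$ and then swapping the two function-space factors (the exponential law and the symmetry of mixed-order function spaces, see \cite{Alz}, \cite{AaS}) yields a smooth map $Q\to C^1([0,1],C^\infty(K,E))$, $\gamma\mapsto\eta_\gamma$, each value of which is, by the above, a $C^1$-curve into~$G$ with $\eta_\gamma(0)=\id_K$. Lemma~\ref{spotevol} identifies $\eta_\gamma=\Evol^r(\gamma)$, and postcomposing with the continuous linear evaluation at $t=1$ shows that $\evol^r|_Q\colon Q\to G$ is smooth.

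For the global stage, given $\gamma_0\in C([0,1],\cg)$ I set $M:=\|\gamma_0\|_{\infty,q}+1$ with $\|\cdot\|_{\infty,q}$ as in~(\ref{thuslip}), fix $N\in\N$ with $M/N<\theta$, and let $W:=\{\gamma\colon\|\gamma-\gamma_0\|_{\infty,q}<1\}$. For $i=1,\dots,N$ the reparametrisation $R_i(\gamma)(s):=\tfrac1N\gamma(\tfrac{s+i-1}{N})$ is continuous linear, hence smooth, and $\|R_i(\gamma)\|_{\infty,q}\le\tfrac1N\|\gamma\|_{\infty,q}<\theta$ for $\gamma\in W$, so $R_i(\gamma)\in Q$. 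Gluing the rescaled local flows by right translation, the piecewise curve $\eta$ given on $[\tfrac{i-1}N,\tfrac iN]$ by $\eta(t)=\evol^r(R_i(\gamma))(Nt-i+1)\circ g_{i-1}$, with $g_0=\id_K$ and $g_i=\evol^r(R_i(\gamma))\circ g_{i-1}$, has matching values at the nodes and one-sided derivatives there both equal to $\gamma(t).\eta(t)$, so it is $C^1$ on $[0,1]$ and solves $\eta'=\gamma.\eta$, $\eta(0)=\id_K$. Choosing $N$ large enough for a given~$\gamma$ this proves existence of $\Evol^r(\gamma)$ for every~$\gamma$, while on the fixed neighbourhood~$W$
\[
\evol^r(\gamma)=\mu_N\big(\evol^r(R_N(\gamma)),\dots,\evol^r(R_1(\gamma))\big),
\]
where $\mu_N\colon G^N\to G$ is iterated composition, smooth by Lemma~\ref{premultHamz}. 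Since each $\evol^r\circ R_i$ is smooth on~$W$ by the local stage, $\evol^r$ is smooth on~$W$; as $\gamma_0$ was arbitrary, $\evol^r$ is smooth, so $G$ is $C^0$-regular.

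The main obstacle is the local stage, and within it the two calculus-on-non-open-domains points: verifying that the time-$t$ flow is genuinely an \emph{element} of $\Diff_{\partial K}(K)$ (confinement to~$K$ together with invertibility of the $x$-derivative via the variational equation, fed into the chart criterion $\Omega_\infty=Q_\infty$) and carrying out the exponential-law bookkeeping that lands $\gamma\mapsto\eta_\gamma$ smoothly in $C^1([0,1],C^\infty(K,E))$. The subdivision step is then comparatively routine; its only real content is the \emph{uniform} choice of~$N$ over a neighbourhood, which is exactly what upgrades pointwise existence of evolutions to smoothness of $\evol^r$.
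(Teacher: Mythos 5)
Your proposal is correct, and its local stage coincides with the paper's: both curry the smooth map $\phi$ of Lemmas~\ref{smtoC0} and~\ref{smtoC1} into a smooth map $\gamma\mto\eta_\gamma$ with values in $C^1([0,1],C^\infty(K,E))$ and identify $\eta_\gamma=\Evol^r(\gamma)$ via Lemma~\ref{spotevol}. You diverge at the two finishing steps, in each case replacing a soft citation by a hard, self-contained argument. First, to put the time-$t$ flow maps into $\Diff_{\partial K}(K)$, the paper does not verify the chart criterion at all: since $\psi^\vee$ is continuous and $\psi^\vee(\{0\}\times[0,1])=\{\id_K\}$, the Wallace Lemma \cite[5.12]{Kel} produces a smaller $0$-neighbourhood $P\sub Q$ with $\psi^\vee(P\times[0,1])\sub\Diff_{\partial K}(K)$, and the paper is content to have the evolution only on~$P$. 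Your direct verification via $\Omega_\infty=Q_\infty$ (injectivity from backward uniqueness, invertible derivative from the variational equation, $\eta_\gamma(t)(x_0)\in K^0$) costs more but yields the evolution on all of~$Q$; one detail there needs care, namely that for $x\in\partial K$ the invertibility of $(\eta_\gamma(t))'(x)$ cannot be obtained as a limit of invertible matrices, but should instead be deduced by passing to the limit in the integral form of the variational equation (both sides are continuous in~$x$ and agree on the dense set~$K^0$) and then using that fundamental solutions of linear ODEs are invertible, pointwise for each fixed $x\in K$. Second, for the passage from local to global, the paper simply invokes \cite[Lemma~9.5]{NaS} (existence of evolutions and smoothness of $\evol^r$ on a $0$-neighbourhood of $C([0,1],\cg)$ imply $C^0$-regularity), whereas your time-subdivision with the rescalings $R_i$ and the gluing by right translations (smooth by Lemma~\ref{premultHamz}) is in substance a proof of that cited lemma for this particular group; it buys self-containedness and the explicit product formula $\evol^r(\gamma)=\evol^r(R_N(\gamma))\circ\cdots\circ\evol^r(R_1(\gamma))$ on~$W$, at the cost of length. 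Both routes are valid; the paper's is shorter and softer, yours avoids two external inputs and gives slightly more information.
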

\begin{proof}
Let $E:=\R^n$. Let $Q$ and $\phi \colon Q\times K\to C^1([0,1],E)$
be as in Lemma~\ref{smtoC1}. Since $\phi$ is smooth,
\[
\wh{\phi}\colon (Q\times K)\times [0,1]\to E,\quad (\gamma,x,t)\mto\phi(\gamma,x)(t)
\]
is $C^{\infty,1}$ (see \cite[Theorem 3.28\,(a)]{AaS}) and thus $C^{\infty,\infty,1}$ as a map on the threefold product
$Q\times K\times [0,1]$ (see \cite[Lemma 81]{Alz}).
Therefore
\[
\psi\colon Q\times [0,1]\times K\to E,\quad \psi(\gamma,t,x):=\wh{\phi}(\gamma,x,t)
\]
is $C^{\infty,1,\infty}$ and thus
\[
\psi^\vee\colon Q\times [0,1]\to C^\infty(K,E),\quad \psi^\vee(\gamma,t)(x):=\psi(\gamma,t,x)
\]
is $C^{\infty,1}$ (see \cite[Theorem 94]{Alz}).
We have $\psi^\vee(\gamma,t)(x)=\phi(\gamma,x)(t)=y_{\gamma,x}(t)$ for all $\gamma\in Q$, $x\in K$ and $t\in [0,1]$,
whence $\psi^\vee(\gamma,t)(x)=x$ whenever $x\in\partial K$.
We can therefore consider $\psi^\vee$ as a continuous
map to the affine vector subspace $\id_K+C^\infty_{\partial K}(K,E)$, in which $\Diff_{\partial K}(K)$
is a neighbourhood of $\id_K$.
Since $\psi^\vee(0,t)(x)=y_{0,x}(t)=x$, we have $\psi^\vee(0,t)=\id_K$ for all $t\in [0,1]$.
Using the Wallace Lemma \cite[5.12]{Kel}, we find an open $0$-neighbourhood $P\sub Q$ such that
\[
\psi^\vee(P\times [0,1])\sub \Diff_{\partial K}(K).
\] 
For fixed $\gamma\in P$, the map $\psi^\vee(\gamma,\sbull)\colon [0,1]\to \Diff_{\partial K}(K)$
is $C^1$ and $\psi^\vee(\gamma,t)(x)=y_{\gamma,x}(t)$ for all $x\in K$ and $t\in [0,1]$, whence
\[
\psi^\vee(\gamma,\sbull)=\Evol^r(\gamma)
\]
by Lemma~\ref{spotevol}. Thus $\Evol^r\colon P\to C^1([0,1],G)$ exists.
If we can show that
\begin{equation}\label{themap}
\evol^r\colon P \to G,\quad \evol^r(\gamma):=\Evol^r(\gamma)(1)
\end{equation}
is smooth, then $G$ will be $C^0$-regular by \cite[Lemma~9.5]{NaS}.
Since $G=\Diff_{\partial K}(K)$ is a smooth submanifold of $C^\infty(K,E)$,
the map $\evol^r$ from (\ref{themap})
will be smooth as a map to $G$ if we can show that it is smooth as a map to $C^\infty(K,E)$.
Now, because
$\psi^\vee$ is $C^{\infty,1}$, the map
\[
g:=(\psi^\vee)^\vee\colon Q\to C^1([0,1],C^\infty(K,E)),\;\;
g(\gamma)(t):=(\psi^\vee)(\gamma,t)
\]
is $C^\infty$ by \cite[Theorem 3.25\,(a)]{AaS}. Using that
the point evaluation\linebreak
$\ve_1\colon C^1([0,1],C^\infty(K,E))\to C^\infty(K,E)$, $\zeta\mto\zeta(1)$
is continuous linear and hence smooth, we deduce that also
\[
\ve_1\circ g\colon Q\to C^\infty(K,E),\quad \gamma\mto g(\gamma)(1)
\]
is smooth. But $g(\gamma)(1)(x)=\psi^\vee(\gamma,1)(x)=\Evol^r(\gamma)(1)(x)=\evol^r(\gamma)(x)$
for all $x\in K$ and thus $g(\gamma)(1)=\evol^r(\gamma)$ for all $\gamma\in P$.
Thus $\evol^r|_P$ is $C^\infty$.
\end{proof}
\section{Consequences for initial value problems on compact convex sets}\label{secODE}
We start with some preparatory considerations.\\[2.5mm]
Let $E:=\R^n$, $K\sub E$ be a compact convex set with non-empty interior,
$J\sub \R$ be a non-degenerate interval, $t_0\in J$ and
$f\colon J\times K\to E$ be a $C^{0,\infty}$-map
such that $f(t,x)=0$ for all $t\in J$ and $x\in\partial K$.
\begin{numba}\label{thusuni}
For each compact subinterval $C\sub J$, we have that
\[
\sup_{t\in C}\Lip(f(t,\sbull))=\sup_{t\in C}q(f^\vee(t))<\infty,
\]
with $q$ as in (\ref{thuslip}).
Hence $f$ satisfies a local Lipschitz condition and hence
solutions to $y'(t)=f(t,y(t))$, $y(t_0)=x_0$ are unique on their
interval of definition
(if they exist), for all $t_0\in J$ and $x_0\in K$.
As a consequence, there is a unique maximal solution $y_{t_0,x_0}\colon J_{t_0,x_0}\to K$
to the preceding initial value problem, such that all other solutions are restrictions
of $y_{t_0,x_0}$ to subintervals of~$J_{t_0,y_0}$.
\end{numba}
\begin{numba}\label{thusrewr}
For fixed $t_0,t\in J$, the map
\[
g\colon [0,1]\times K\to E,\quad (\tau,x)\mto (t-t_0)f(t_0+\tau (t-t_0),x)
\]
is $C^{0,\infty}$ and hence $g^\vee\colon [0,1]\to C^\infty_{\partial K}(K,E)\sub C^\infty(K,E)$
is continuous by \cite[Theorem 3.25\,(a)]{AaS}. Thus $\Evol^r(g^\vee)\colon [0,1]\to \Diff_{\partial K}(K)$
is $C^1$ and we know that, for $x_0\in K$,
\[
[0,1]\to K, \;\; \tau\mto \Evol^r(g^\vee)(\tau)(x_0)
\]
is the solution to $y'(\tau)=g(\tau,y(\tau))$, $y(0)=x_0$.
If $t-t_0\not=0$, using the Chain Rule this implies that
\begin{equation}\label{ugly}
s\mto \Evol^r(g^\vee)\left(\frac{s-t_0}{t-t_0}\right)(x_0)
\end{equation}
(for $s$ in the interval $I$ between $t_0$ and $t$) solves $y'(s)=f(s,y(s))$, $y(t_0)=x_0$.
Thus $I$ is contained in the domain of definition $J_{t_0,x_0}$ of the maximal solution
$y_{t_0,x_0}$ of the latter initial value problem, and $y_{t_0,x_0}(s)$
is given by (\ref{ugly}) for all $s\in I$. As $t$ was arbitrary, we deduce that
$y_{t_0,x_0}$ is defined on all of~$J$.
For later use, let us take $s:=t$ in (\ref{ugly}); we obtain
\begin{equation}\label{ugly2}
y_{t_0,x_0}(t)=\Evol^r(g^\vee)(1)(x_0)=\evol^r(g^\vee)(x_0).
\end{equation}
\end{numba}
\begin{prop}
Let $J\sub\R$ be a non-degenerate interval, $K\sub\R^n$ be a compact
convex set with non-empty interior and $P\sub F$ be a locally convex subset with dense interior
in a locally convex space~$F$. Let $r,s\in\N_0\cup\{\infty\}$ and
\[
f\colon P\times J\times K\to\R^n
\]
be a $C^{r,s,\infty}$-map such that $f(p,t,x)=0$ for all $p\in P$, $t\in J$ and $x\in\partial K$.
Then the initial value problem
\[
y'(t)=f(p,t,y(t)),\quad y(t_0)=x_0
\]
has a unique solution $y_{p,t_0,x_0}\colon J\to K$ defined on all of~$J$,
for all $p\in P$, $t_0\in J$, and $x_0\in K$.
The associated flow
\[
\Phi\colon P\times (J\times J)\times K \to K,\quad (p,t_0,t,x_0)\mto y_{p,t_0,x_0}(t)
\]
is $C^{r,s,\infty}$.
\end{prop}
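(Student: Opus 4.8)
The plan is to reduce, exactly as in the preparatory discussion~\ref{thusrewr}, every initial value problem on~$J$ to a problem on $[0,1]$ by an affine reparametrisation of time, and then to read off both solution and flow from the (now established) $C^0$-regularity of $\Diff_{\partial K}(K)$, which supplies a \emph{globally} smooth map $\evol^r\colon C([0,1],C^\infty_{\partial K}(K,E))\to\Diff_{\partial K}(K)$; here $E:=\R^n$. I first dispose of existence and uniqueness. For fixed $p\in P$ the map $f_p:=f(p,\sbull,\sbull)\colon J\times K\to E$ is $C^{s,\infty}$, in particular $C^{0,\infty}$, and vanishes on $J\times\partial K$, so~\ref{thusuni} and~\ref{thusrewr} apply to~$f_p$ verbatim: the maximal solution $y_{p,t_0,x_0}$ is unique and defined on all of~$J$, and by~(\ref{ugly2}),
\[
y_{p,t_0,x_0}(t)=\evol^r\big(g^\vee_{p,t_0,t}\big)(x_0),\qquad
g_{p,t_0,t}(\tau,x):=(t-t_0)\,f\big(p,t_0+\tau(t-t_0),x\big)
\]
for $\tau\in[0,1]$, $x\in K$. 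For $t=t_0$ the field $g_{p,t_0,t_0}$ is identically~$0$, so $g^\vee_{p,t_0,t_0}=0$ and $\evol^r(0)=\id_K$; thus the displayed formula also holds at $t=t_0$, where both sides equal~$x_0$. This proves the first two assertions and reduces the smoothness of the flow to that of $(p,t_0,t,x_0)\mapsto\evol^r(g^\vee_{p,t_0,t})(x_0)$.

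The heart of the argument is to assemble the reparametrised fields into a single map and to control its regularity. Consider
\[
\wh{g}\colon P\times(J\times J)\times[0,1]\times K\to E,\qquad
\wh{g}(p,t_0,t,\tau,x):=(t-t_0)\,f\big(p,t_0+\tau(t-t_0),x\big).
\]
The inner substitution $(t_0,t,\tau)\mapsto t_0+\tau(t-t_0)$ is polynomial and, being a convex combination of $t_0,t\in J$, takes values in~$J$; the prefactor $t-t_0$ is polynomial in $(t_0,t)$. Hence, composing with the $C^{r,s,\infty}$-map~$f$ and using the chain rule together with the structural lemmas for $C^{\vec\alpha}$-maps (\cite[Lemmas 77, 81]{Alz}), I expect $\wh g$ to be $C^{r,s,0,\infty}$ in the four blocks $\big(p,(t_0,t),\tau,x\big)$: the $C^s$-dependence of~$f$ on its time slot is transported by the smooth reparametrisation to a joint $C^s$-dependence on $(t_0,t,\tau)$, which in particular is $C^{s,0}$ after splitting off~$\tau$. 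Two applications of the exponential law \cite[Theorem 3.25\,(a)]{AaS}, currying out first~$x$ and then the $[0,1]$-variable~$\tau$, then show that
\[
\Xi\colon P\times(J\times J)\to C\big([0,1],C^\infty_{\partial K}(K,E)\big),\qquad
\Xi(p,t_0,t):=g^\vee_{p,t_0,t}
\]
is $C^{r,s}$; its values lie in $C^\infty_{\partial K}$ because $\wh g(p,t_0,t,\tau,x)=0$ for $x\in\partial K$. I expect this multi-index bookkeeping---arranging the blocks so that~$\tau$ carries merely $C^0$ (exactly what $\evol^r$ consumes) while $(t_0,t)$ retains $C^s$---to be the main obstacle; once it is in place the remaining steps are routine.

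It remains to compose and uncurry. Since $\Diff_{\partial K}(K)$ is $C^0$-regular, $\evol^r$ is smooth on all of $C([0,1],C^\infty_{\partial K}(K,E))$ (so no smallness of $|t-t_0|$ is needed), and the inclusion $\Diff_{\partial K}(K)\hookrightarrow C^\infty(K,E)$ is smooth; therefore the chain rule gives that
\[
H:=\evol^r\circ\Xi\colon P\times(J\times J)\to C^\infty(K,E),\qquad
H(p,t_0,t)=\evol^r\big(g^\vee_{p,t_0,t}\big)
\]
is $C^{r,s}$. Finally $K$ is compact, hence locally compact, so the exponential law in the uncurrying direction (\cite[Theorem 3.28\,(a)]{AaS} and its multivariable form in~\cite{Alz}) yields that
\[
\wh H\colon P\times(J\times J)\times K\to E,\qquad
\wh H(p,t_0,t,x_0):=H(p,t_0,t)(x_0)
\]
is $C^{r,s,\infty}$. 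By the master formula of the first paragraph, $\wh H(p,t_0,t,x_0)=y_{p,t_0,x_0}(t)=\Phi(p,t_0,t,x_0)$, and $\Phi$ takes its values in the finite-dimensional target within~$K$; thus $\Phi$ is the asserted $C^{r,s,\infty}$-map, completing the proof.
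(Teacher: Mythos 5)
Your proposal is correct and follows essentially the same route as the paper's own proof: reduce to $[0,1]$ by the affine time reparametrisation of \ref{thusrewr}, form the field $h(p,t_0,t,\tau,x)=(t-t_0)f(p,t_0+\tau(t-t_0),x)$, verify it is $C^{r,s,0,\infty}$, curry twice (in $x$, then in $\tau$) to get a $C^{r,s}$-map into $C([0,1],C^\infty_{\partial K}(K,\R^n))$, compose with the smooth $\evol^r$ furnished by $C^0$-regularity, and uncurry via the exponential law to recover $\Phi$ through (\ref{ugly2}). The only cosmetic differences are your explicit treatment of the degenerate case $t=t_0$ (which the paper leaves implicit) and your citing the exponential laws of \cite{AaS} together with their multivariable forms where the paper cites the multivariable statements of \cite{Alz} directly.
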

\begin{proof}
The uniqueness assertion was settled in \ref{thusuni} and existence in \ref{thusrewr}.
We now use that the mapping $h\colon P\times (J\times J) \times [0,1]\times K\to \R^n$,
\[
h(p,t_0,t,\tau ,x):=(t-t_0)f(p,t_0+\tau (t-t_0),x)
\]
is $C^{r,s,0,\infty}$, whence
\[
h^\vee\colon P\times (J\times J)\times [0,1]\to C^\infty(K,\R^n),\quad h^\vee(p,t_0,t,\tau)(x):=h(p,t_0,t,\tau,x)
\]
is $C^{r,s,0}$, by \cite[Theorem 94]{Alz}. Then $h^\vee$ is also $C^{r,s,0}$ as a map to
the closed vector subspace $C^\infty_{\partial K}(K,\R^n)$ of $C^\infty(K,\R^n)$.
Using \cite[Theorem 94]{Alz} again, we see that
\[
(h^\vee)^\vee\colon P\times (J\times J)\to C([0,1],C^\infty_{\partial K}(K,\R^n)),\quad
(h^\vee)^\vee(p,t_0,t)(\tau):=h^\vee(p,t_0,t,\tau)
\]
is $C^{r,s}$.
Hence also
\[
g:=\evol^r\circ (h^\vee)^\vee\colon P\times (J\times J)\to\Diff_{\partial K}(K)\sub C^\infty(K,\R^n)
\]
is $C^{r,s}$, where $\evol^r\colon C([0,1],C^\infty_{\partial K}(K,\R^n))\to\Diff_{\partial K}(K)$
is the right evolution map which is smooth by $C^0$-regularity of $\Diff_{\partial K}(K)$.
Hence
\[
\wh{g}\colon P\times (J\times J)\times K\to K,\quad \wh{g}(p,t_0,t,x_0):=g(p,t_0,t)(x_0)
\]
is $C^{r,s,\infty}$ (see \cite[Theorem 96]{Alz}). Since $\Phi=\wh{g}$ (cf.\ (\ref{ugly2})),
the proposition is established.
\end{proof}
Theorem B is a special case:
If $f$ is $C^\infty$ and thus $C^{\infty,\infty,\infty}$ in the preceding proposition,
then $\Phi$ is $C^{\infty,\infty,\infty}$ and hence $C^\infty$
(by \cite[Remark 79]{Alz}).
\section{The Lie group {\boldmath$\Diff_{\flt}(K)$}}\label{secflat}
Let $K\sub E=\R^n$ as before.
For $k\in\N$, let $P_k(E)$ be the finite-dimensional vector space of all homogeneous polynomials $p\colon E\to E$ of order~$k$
and
\[
P_{\leq k}(E)_0\cong\bigoplus_{j=1}^k P_j(E)
\]
be the finite-dimensional vector space of all polynomial functions
$p\colon E\to E$ of degree $\leq k$ such that $p(0)=0$.
Then $P_1(E)=\cL(E)$ is the space of linear endomorphisms of~$E$.
Given $p,q\in P_{\leq k}(E)_0$, let $p\diamond_k q$ be the $k$th order Taylor polynomial of $p\circ q$.
Thus, if $p\circ q=\sum_{j=1}^{k^2} h_j$ with homogeneous polynomials $h_j\colon E\to E$
of order~$j$, then $p\diamond_k q$ is given by the truncated composition
\[
p\diamond_k q=\sum_{j=1}^k h_j.
\]
It is clear that the map $\diamond_k$ is smooth and that $(P_{\leq k}(E)_0,\diamond_k)$ is a monoid
with $\id_E$ as the neutral element and open unit group
\[
P_{\leq k}(E)_0^\times = \GL(E)\times\bigoplus_{j=2}^k P_j(E).
\]
Also the inversion map is smooth as it takes $p$ to the $k$th order Taylor polynomial of $p|_U^{-1}$ at~$0$
(for some open $0$-neighbourhood $U\sub E$), which depends smoothly on~$p$ (e.g., by the Inverse
Function Theorem with Parameters in~\cite{IMP}).
Thus $P_{\leq k}(E)_0^\times$ is a (finite-dimensional) smooth Lie group.
Note that each $x_0\in\partial K$ is a fixed point for each $\phi\in \Diff_{\partial K}(K)$,
and the map
\[
f_{x_0,k}\colon \Diff_{\partial K}(K)\to P_{\leq k}(E)_0^\times
\]
which takes $\phi$ to the $k$th order Taylor polynomial of $\phi- x_0$ around $x_0$
is a smooth group homomorphism, for each $k\in\N$.
Let $\cO\colon \partial K\to \N_0\cup\{\infty\}$ be a function
and
\[
\Diff_\cO(K)
\]
be the group of all $\phi\in\Diff_{\partial K}(K)$ such that $f_{x_0,k}(\phi)=\id_E$
for all $x_0\in\partial K$ and all $k\in\N$ such that $k\leq \cO(x_0)$
(if $\cO(x_0)=0$, then the condition is vacuous).
Then $\Diff_\cO(K)$
is a closed normal subgroup of $\Diff_{\partial K}(K)$.
Taking $\cO(x_0):=\infty$ for each $x_0\in\partial K$, we obtain
\[
\Diff_{\flt}(K):=\Diff_\cO(K)
\]
as a special case.
Returning to general $\cO$, it is clear that
\[
C^\infty_\cO(K,E):=
\{\eta\in C^\infty_{\partial K}(K,E)\colon (\forall x_0\in\partial K)(\forall \N_0\ni k\leq \cO(x_0))\;\; \eta^{(k)}(x_0)=0\}
\]
is a closed vector subspace of $C^\infty_{\partial K}(K,E)$.
\begin{prop}
For each compact convex subset $K\sub\R^n=:E$ with non-empty interior
and each $\cO\colon\partial K\to\N_0\cup\{\infty\}$, the subgroup
$\Diff_\cO(K)$ is a submanifold of $\Diff_{\partial K}(K)$ modelled on $C^\infty_\cO(K,E)$
and hence a Lie group. The Lie group $\Diff_\cO(K)$ is $C^0$-regular.
\end{prop}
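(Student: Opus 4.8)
The plan is to first identify $\Diff_\cO(K)$ inside the global chart of $\Diff_{\partial K}(K)$, and then to deduce $C^0$-regularity from that of the ambient group by exploiting the smooth homomorphisms $f_{x_0,k}$ introduced above.

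For the submanifold structure, I would work in the global chart $\Phi\colon\phi\mto\phi-\id_K$ of $\Diff_{\partial K}(K)$, whose image is the open $0$-neighbourhood $\Omega\sub C^\infty_{\partial K}(K,E)$. The key computation is that for $\gamma\in C^\infty_{\partial K}(K,E)$ with $\id_K+\gamma\in\Diff_{\partial K}(K)$, the $k$th order Taylor polynomial of $(\id_K+\gamma)-x_0$ around a point $x_0\in\partial K$ equals $\id_E$ plus the $k$th order Taylor polynomial of $\gamma$ around $x_0$ (the identity contributing only the linear term $\id_E$, and $\gamma(x_0)=0$). Hence $f_{x_0,k}(\id_K+\gamma)=\id_E$ holds if and only if $\gamma^{(j)}(x_0)=0$ for $1\leq j\leq k$, so that
\[
\Phi(\Diff_\cO(K))=\Omega\cap C^\infty_\cO(K,E).
\]
Since $\Omega$ is open and $C^\infty_\cO(K,E)$ is closed, this intersection is open in $C^\infty_\cO(K,E)$, and $\Phi$ restricts to a global chart realizing $\Diff_\cO(K)$ as a submanifold modelled on $C^\infty_\cO(K,E)$. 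As $\Diff_\cO(K)$ is a subgroup left invariant by the (smooth) group operations of $\Diff_{\partial K}(K)$, their corestrictions to this submanifold are smooth, making $\Diff_\cO(K)$ a Lie group.

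For $C^0$-regularity, the idea is that the ambient flow automatically stays in the subgroup. Given $\gamma\in C([0,1],C^\infty_\cO(K,E))$, I would view it, via the continuous linear inclusion into $C^\infty_{\partial K}(K,E)$, as a curve in $L(\Diff_{\partial K}(K))$ and form $\eta:=\Evol^r(\gamma)$ in the $C^0$-regular group $\Diff_{\partial K}(K)$. To see that $\eta(t)\in\Diff_\cO(K)$, I would fix $x_0\in\partial K$ and $k\leq\cO(x_0)$ and apply the smooth homomorphism $f_{x_0,k}\colon\Diff_{\partial K}(K)\to P_{\leq k}(E)_0^\times$. Its differential at $\id_K$ sends $\zeta$ to the $k$th order Taylor polynomial of $\zeta$ at $x_0$, by the description above; hence $L(f_{x_0,k})(\gamma(t))=0$ for all $t$, since $\gamma(t)^{(j)}(x_0)=0$ for $j\leq k$. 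Because smooth homomorphisms intertwine right evolutions, $f_{x_0,k}\circ\eta$ is the right evolution in $P_{\leq k}(E)_0^\times$ of $L(f_{x_0,k})\circ\gamma\equiv 0$, i.e.\ the constant curve $\id_E$. Thus $f_{x_0,k}(\eta(t))=\id_E$ for all $t$, and as $x_0$ and $k\leq\cO(x_0)$ were arbitrary, $\eta(t)\in\Diff_\cO(K)$ for all $t$.

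Finally I would conclude as follows. Being a $C^1$-curve into the ambient group with image in the submanifold, $\eta$ is $C^1$ into $\Diff_\cO(K)$; since $\Diff_\cO(K)$ is a Lie subgroup, its right action and evolution equation are restrictions of those of $\Diff_{\partial K}(K)$, so $\eta=\Evol^r_{\Diff_\cO(K)}(\gamma)$. Then $\evol^r$ for $\Diff_\cO(K)$ is the composite of the continuous linear inclusion $C([0,1],C^\infty_\cO(K,E))\to C([0,1],C^\infty_{\partial K}(K,E))$ with the smooth map $\evol^r_{\Diff_{\partial K}(K)}$, corestricted to the submanifold $\Diff_\cO(K)$; hence it is smooth, and $\Diff_\cO(K)$ is $C^0$-regular. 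The step I expect to require the most care is the intertwining of evolutions by $f_{x_0,k}$: this should follow from the chain rule, using that a homomorphism intertwines right translations, together with uniqueness of right evolutions in the finite-dimensional (hence $C^0$-regular) group $P_{\leq k}(E)_0^\times$; the only subtlety is verifying it in the present setting of merely continuous curves.
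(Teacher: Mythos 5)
Your proposal is correct, and its first half (the chart computation showing $\Phi(\Diff_\cO(K))=\Omega\cap C^\infty_\cO(K,E)$, hence the submanifold and Lie group structure) coincides with the paper's argument, which asserts this identification with even less detail than you provide. Where you genuinely diverge is the proof of $C^0$-regularity. The paper introduces the trivial homomorphisms $g_{x_0,k}\colon\Diff_{\partial K}(K)\to P_{\leq k}(E)_0^\times$, $\phi\mto\id_E$, observes that $\Diff_\cO(K)$ is exactly the equalizer of the pairs $(f_{x_0,k},g_{x_0,k})$ of smooth Lie group homomorphisms, and then invokes the general result \cite[Theorem G]{SEM}, by which equalizers of smooth homomorphisms inherit $C^0$-regularity from the ambient $C^0$-regular group. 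You instead inline a direct proof of precisely this special case: you show that for $\gamma\in C([0,1],C^\infty_\cO(K,E))$ the ambient evolution $\eta=\Evol^r(\gamma)$ satisfies $f_{x_0,k}\circ\eta\equiv\id_E$ (via $L(f_{x_0,k})\circ\gamma=0$ and the intertwining of right evolutions by smooth homomorphisms), so $\eta$ stays in $\Diff_\cO(K)$, and then restrict/corestrict $\evol^r$. Both arguments rest on the same structural fact---that $\Diff_\cO(K)$ is cut out by the smooth homomorphisms $f_{x_0,k}$---but the paper's route is shorter at the cost of a black-box citation, while yours is self-contained. One remark on the step you flag as delicate: you do not actually need uniqueness of evolutions in $P_{\leq k}(E)_0^\times$, nor any care about $\gamma$ being merely continuous. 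Since $\eta$ is $C^1$ and $f_{x_0,k}$ is smooth, the chain rule together with $T f_{x_0,k}(v.g)=(L(f_{x_0,k})v).f_{x_0,k}(g)$ gives $(f_{x_0,k}\circ\eta)'(t)=\bigl(L(f_{x_0,k})\gamma(t)\bigr).\bigl(f_{x_0,k}(\eta(t))\bigr)=0$ for all $t$, and a $C^1$-curve with vanishing derivative in a finite-dimensional Lie group is constant; this closes the argument without appeal to uniqueness.
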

\begin{proof}
The chart $\Phi\colon \Diff_{\partial K}(K)\to\Omega\sub C^\infty_{\partial K}(K,E)$, $\phi\mto\phi-\id_K$ takes
$\Diff_\cO(K)$ onto $\Omega\cap C^\infty_\cO(K,E)$, whence $\Diff_\cO(K)$ is a submanifold
modelled on the closed vector subspace $C^\infty_\cO(K,E)$ of $C^\infty_{\partial K}(K,E)$.
Let $g_{x_0,k}\colon \Diff_{\partial K}(K)\to P_{\leq k}(E)_0^\times$ be the trivial homomorphism
taking each $\phi$ to the neutral element~$\id_E$.
Then the Lie subgroup $\Diff_\cO(K)$ coincides with the equalizer
\[
\{\phi\in\Diff_{\partial K}(K)\colon (\forall x_0\in\partial K)(\forall \N\ni k\leq\cO(x_0))\;\;
f_{x_0,k}(\phi)=g_{x_0,k}(\phi)\}
\]
of the given pairs $(f_{x_0,k},g_{x_0,k})$ of smooth homomorphisms of Lie groups.
Hence $\Diff_\cO(K)$ inherits the $C^0$-regularity from the ambient $C^0$-regular Lie group
$\Diff_{\partial K}(K)$, by \cite[Theorem G]{SEM}.
\end{proof}
\appendix
\section{Proof of a folklore lemma}\label{appA}
{\bf Proof of Lemma~\ref{clogra}.}
(a) If $(x_\alpha,y_\alpha)_{\alpha\in A}$ is a net in $\graph(f)$ which converges to some
$(x,y)\in X\times K$, then the net of the $y_\alpha=f(x_\alpha)$ converges to $f(x)$,
by continuity of~$f$. Since also $y_\alpha\to y$ and limits in Hausdorff spaces are unique,
we obtain $(x,y)=(x,f(x))\in\graph(f)$.

(b) We show that if $f$ is not continuous, then $\graph(f)$ is not closed. 
Now, if $f$ is not continuous, then $f$ fails to be continuous at some $x\in X$.
Hence, there is an open neighbourhood $V\sub K$ of $f(x)$
such that $f^{-1}(V)$ is not a neighbourhood of~$x$ in~$X$.
Thus $U\setminus f^{-1}(V)\not=\emptyset$ for each neighbourhood~$U$ of~$x$ in~$X$.
Pick $x_U\in U\setminus f^{-1}(V)$. Since $f(x_U)\in K\setminus V$ and $K\setminus V$ is compact,
there is a convergent subnet $(f(x_{U(\alpha)}))_{\alpha\in A}$ (indexed by some directed set $(A,\leq)$).
Let $y\in K\setminus V$ be its limit.
Then $(x_{U(\alpha)},f(x_{U(\alpha)}))_{\alpha\in A}$ is a net in $\graph(f)$
which converges to $(x,y)$. We have $y\not= f(x)$ (since $y\in K\setminus V$ but $f(x)\in V$)
and hence $(x,y)\not\in\graph(f)$. Thus $\graph(f)$ is not closed.\;\Punkt\vspace{-2mm}
{\small
{\bf Helge  Gl\"{o}ckner}, Institut f\"{u}r Mathematik, Universit\"at Paderborn,\\
Warburger Str.\ 100, 33098 Paderborn, Germany; {\tt  glockner@math.upb.de}\\[2mm]
{\bf Karl-Hermann Neeb}, Department Mathematik, FAU Erlangen-N\"{u}rnberg,\\
Cauerstr.\ 11, 91058 Erlangen,
Germany; {\tt neeb@math.fau.de}}\vfill
\end{document}